\newtheorem{theo}{\bf Theorem}[section]
\newtheorem{lem}[theo]{\bf Lemma}
\newtheorem{pro}[theo]{\bf Proposition}
\newenvironment{Proofc}[1]{\smallskip\par\noindent\textbf{#1}\quad}%
  {\hfill$\Box$\bigskip\par}
\newenvironment{proof}{\begin{Proofc}{Proof}}{\end{Proofc}}
\newcommand{\R}{{\mathbb R}}
\DeclareMathOperator*{\dx}{\mbox{\small\textsf{dx}}}
\DeclareMathOperator*{\dt}{\mbox{\small\textsf{dt}}}
\DeclareMathOperator*{\dxb}{\mbox{\footnotesize\textsf{dx}}}
\DeclareMathOperator*{\dtb}{\mbox{\footnotesize\textsf{dt}}}
\DeclareMathOperator*{\fle}{\rightarrow}
\newcommand{\per}{\varepsilon}
\newcommand{\eof}{{\unskip\nobreak\hfil\penalty50
          \hskip2em\hbox{}\nobreak\hfil\mbox{\rule{1ex}{1ex} \qquad}
   \parfillskip=0pt
   \finalhyphendemerits=0\par\medskip}} 
\newcommand{\ba}{\begin{array}{ll}}
\newcommand{\ea}{\end{array}}
\newcommand{\be}{\begin{equation}}
\newcommand{\ee}{\end{equation}}
\newcommand{\bi}{\begin{itemize}}
\newcommand{\ei}{\end{itemize}}
\newcommand{\bc}{\begin{center}}
\newcommand{\ec}{\end{center}}
\newcommand{\bfig}{\begin{figure}[!ht]}
\newcommand{\efig}{\end{figure}}
\newcommand{\ben}{\begin{enumerate}}
\newcommand{\een}{\end{enumerate}}
\newcommand{\bmat}{\left[\begin{matrix}}
\newcommand{\emat}{\end{matrix}\right]}
\begin{document}

\title{From traffic and pedestrian follow-the-leader models with reaction time 
to first order convection-diffusion flow models} 

\author{
Antoine Tordeux\thanks{Forschungszentrum J\"ulich GmbH and Bergische Universit\"at Wuppertal, Germany
    (\href{mailto:a.tordeux@fz-juelich.de}{a.tordeux@fz-juelich.de}).}
  \and
  Guillaume Costeseque\thanks{INRIA Sophia Antipolis--M\'editerran\'ee, France (\href{mailto:guillaume.costeseque@inria.fr}{guillaume.costeseque@inria.fr}).}
  \and
  Michael Herty\thanks{Rheinisch--Westfälische Technische Hochschule Aachen, Germany
    (\href{mailto:herty@igpm.rwth-aachen.de}{herty@igpm.rwth-aachen.de}).}
		\and
  Armin Seyfried\thanks{Forschungszentrum J\"ulich GmbH and Bergische Universit\"at Wuppertal, Germany
    (\href{mailto:a.seyfried@fz-juelich.de}{a.seyfried@fz-juelich.de}).}
}

\maketitle

% REQUIRED
\begin{abstract}
In this work, we derive first order continuum traffic flow models from a microscopic delayed follow-the-leader model. 
Those are applicable in the context of vehicular traffic flow as well as pedestrian traffic flow. 
The microscopic model is based on an {\em optimal} velocity function and a reaction time parameter.  
The corresponding macroscopic formulations in Eulerian or Lagrangian coordinates 
result in first order convection-diffusion equations. 
More precisely, the convection is described by the optimal velocity while the diffusion term depends on the reaction time. 
A  linear stability analysis for homogeneous solutions of both continuous and discrete models are provided. 
The conditions match the ones of the car-following model for specific values of the space discretization.
The behavior of the novel model is illustrated thanks to numerical simulations. 
Transitions to collision-free self-sustained stop-and-go dynamics are obtained if the reaction time is sufficiently large. 
The results show that the dynamics of the microscopic model can be well captured by the macroscopic equations. 
For non--zero reaction times we observe a scattered fundamental diagram. The scattering width is compared
to real pedestrian and road traffic data. 
\end{abstract}

% REQUIRED
\textbf{Keywords:}
First order traffic flow models, micro/macro connection, hyperbolic conservation laws, Godunov scheme, numerical simulation.

% REQUIRED
\textbf{AMS:}
  35F20, 70F45, 90B20, 65M12.

\section{Introduction} 

Microscopic and macroscopic approaches for the purpose of vehicular traffic flow modelling
have been often developed separately in the engineering community  
\cite{btre,Helbing1997aa,BayenClaudel2010aa,Lebacque1993aa}. 
Similar models can also be used in the description of
pedestrian dynamics \cite{SchadschneiderSeyfried2011aa,ChraibiKemlohSchadschneider2011aa,Appert-RollandDegondMotsch2011aa}. 
Typically, microscopic models are based on the so-called ``follow-the-leader'' strategy and 
they are stated as (finite of infinite) systems of Ordinary Differential Equations (ODEs).  
They are generally based on speed or acceleration functions which depend on distance spacing, speed, predecessor' speed, relative speed and so on. 
One of the simplest approach is a speed model solely based on the spacing, firstly proposed by Pipes \cite{Pipes1953}
\be
\dot x_i(t)=W(\Delta x_i(t)),
\label{pip}
\ee
where $\Delta x_i(t)=x_{i+1}(t)-x_i(t)$ denotes the spacing between the vehicle $(i)$ to its predecessor $(i+1)$ 
and $W(\cdot)$ stands for the equilibrium (or optimal) speed function depending on the spacing.
The microscopic models are discrete in the sense that the vehicles or pedestrians $i \in \mathbb{Z}$ 
are individually considered. 
A macroscopic description consider the flow of vehicles or pedestrians (in the following also referred to as \emph{agents}) 
as a continuum in Eulerian or Lagrangian coordinates. 
For instance in the most classical Eulerian time-space framework, 
the main variables are the density, the flow and the mean speed. 
The simplest approach is the scalar hyperbolic equation of the celebrated 
Lighthill-Whitham-Richards (LWR) model \cite{lw,r}
\be
\partial_t\rho+\partial_x(\rho V(\rho))=0. 
\label{lwr}
\ee
Here $\rho$ is the density, $V(\cdot)$ is the equilibrium speed function which is assumed to depend only on the density. 
The flow $f(\rho)=\rho V(\rho)$  is given by the product of the density times the mean speed. 
The model is derived from the continuity equation for which the flow is supposed in equilibrium. 
The microscopic and macroscopic models Eq.~(\ref{pip}) and Eq.~(\ref{lwr}) well reproduce shock-wave phenomena for Riemann problems. 
Yet such models are not able to describe the observed transition to scattered flow/density relation 
(the fundamental diagram) with hysteresis and self-sustained stop-and-go phenomena (see \cite{Treiterer1974,Kerner1997,Chowdhury2000} and Fig.~\ref{fd-data}).  
This is due to the fact that spatially homogeneous regime are always in the equilibrium solutions and determined  
by the functions $W(\cdot)$ and $V(\cdot)$, respectively.

\bfig\bc
\includegraphics[width=\textwidth]{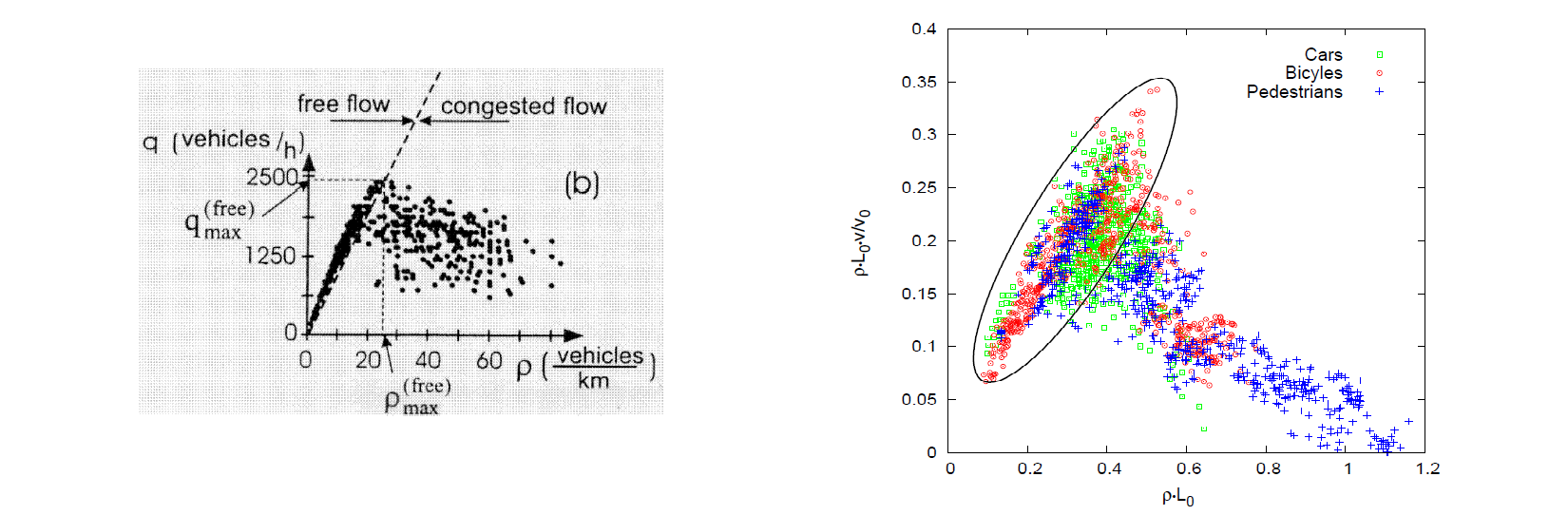}\vspace{-2mm}
\caption{Empirical fundamental diagrams. Left, \cite[Figure 1]{Kerner2000} and right, \cite[Figure 5]{ZhangJ2014}.}
\label{fd-data}
\ec\efig

Therefore, the microscopic behavior is modified by introducing reaction and relaxation times. 
The simplest following model of this type may be the delayed model by Newell \cite{Newell1961}
\be
\dot x_i(t+\tau)=W(\Delta x_i(t)),
\label{new}
\ee
with $\tau$ the reaction time (if positive). %, $\Delta x_i(t)=x_{i+1}(t)-x_i(t)$ the spacing and $W(\cdot)$ the equilibrium (or optimal) speed function. 
Applying a Taylor expansion in the l.h.s.~of the delayed speed model Eq.~(\ref{new}), 
we obtain the second order `optimal velocity model' (OVM) introduced by Bando \textit{et al.} in \cite{Bando1995}. 
The OVM has %for sigmoid speed function $W(\cdot)$ 
limit-cycles in stationary states, with self-sustained propagation of non-linear stop-and-go waves, and 
hysteresis curves in the fundamental flow/density diagram (see \cite{Orosz2006,Orosz2010}). 
Macroscopic second-order models comprised of systems of hyperbolic equations are also able to reproduce 
non-linear stop-and-go waves and scattering of the fundamental diagram. 
One of the first approach is the one by Payne and Whitham \cite{Payne1971,Whitham2011}. 
The model can be derived from the microscopic Newell model Eq.~(\ref{new}). 
The main drawback of this model is that, as pointed out by Daganzo \cite{Daganzo1995}, 
the speed and the density could yield negative values and are not bounded. 
Note that this drawback is also observed with follow--the--leader models like the OVM and is referred as 
collision between the vehicles (see for instance \cite{Davis2003,Orosz2009} or \cite[Chap.~15]{btre}). 
Aw and Rascle have corrected this issue by replacing the space derivative 
of the `pressure' by a convective derivative \cite{Aw2000} (AR model). 
Nowadays extensions of the AR model such as the ARZ, GARZ or generalized models 
\cite{Greenberg2002,Zhang2002,Berthelin2008,Flynn2009,Seibold2013,Fan2014}, 
as well as two phase models coupled  with the LWR model \cite{Colombo2003,Goatin2006,Colombo2010,Blandin2011},
are used to describe transition to congested traffic with scattered fundamental diagrams and self-sustained non-linear shock waves. 
A general framework is the generic second order model (GSOM) family introduced in \cite{Lebacque2007,Lebacque2005}. 
Most of the approaches are a posteriori based on the continuous description. 

In this article, we derive minimalist macroscopic traffic flow models of first order
from a microscopic speed model to describe stop-and-go wave phenomena and scattering of the fundamental diagram.
The use of first order models allow us to ensure by construction that 
the speed and the density remain positive and bounded.  
The starting point is a OV microscopic model of first order including a reaction time parameter. 
We show in Sec.~\ref{models} that the corresponding macroscopic model results in a convection-diffusion equation. 
%The convection is determined by the optimal velocity while the diffusion term depends on the reaction time.  
The macroscopic model is discretized using distinct Godunov and Euler-based schemes and 
the linear stability conditions for the homogeneous solutions of these numerical schemes are provided in Sec.~\ref{num}. 
The conditions match the ones of the car-following model for specific values of the spatial discretization step.
% if the time step is . 
Simulations are carried out in Sec.~\ref{sim}. 
%and bounded linear optimal speed functions. 
Systems with different initial conditions are numerically solved. 
Further, we compare with data of realistic traffic flow as well as pedestrian flow. 

\section{Microscopic and macroscopic models} \label{models}

\subsection{The microscopic follow-the-leader model}

The microscopic model we use has been introduced in \cite{Tordeux2014}. 
It is based on the Newell model Eq.~(\ref{new}).
In the remaining of the paper, we assume that $W : s \mapsto W(s)$ is Lipschitz continuous, 
non-decreasing and upper bounded
in order to get the well-posedness of Eq.~(\ref{new}) 
supplemented with initial conditions $x_i (t=0) = x_{i,0}$ for any $i \in \mathbb{Z}$.
We rewrite the equation as 
\be
 \dot x_i(t) = W( x_{i+1}(t-\tau) - x_i(t-\tau)),
\ee
and apply a Taylor expansion in the argument of $W$. 
Neglecting higher--order terms in $\tau$ we obtain 
\be
\dot x_i(t)= W\big(\Delta x_i(t)-\tau\big[W(\Delta x_{i+1}(t))-W(\Delta x_i(t))\big]\big).
\label{mmi}
\ee
The model is a system of ordinary differential equations of first order with two predecessors in interaction. 
It is calibrated by the delayed time $\tau\in\mathbb R$, that is a reaction time if positive and an anticipation time if negative, 
and the optimal speed function $W(\cdot)$. 
The function $W(\cdot)$ is supposed to be bounded by $V_0>0$, positive and zero if the spacing is smaller than $\ell$, 
$\ell>0$ being the vehicle's length or size of the pedestrian.  
Note that the model admits a minimum principle, say $\Delta x_i(t)\ge\ell$ $\forall i,t$.
Thus it is by construction collision-free and it has the same 
stability condition as the initial microscopic Newell model Eq.~(\ref{new}) or as the OVM from \cite{Bando1995}. 
This condition is for all $s \in \mathbb{R}$
\be
|\tau| W'(s) <1/2.
\label{csmicro}\ee
Note that the condition simply reduces to $|\tau|< T/2$ if one considers the linear fundamental diagram 
$W : s \mapsto W(s) := \max \left\{ 0 , \min \left\{ \frac{1}{T} (s-\ell) , V_0 \right\} \right\}$, with $T > 0$.
When unstable, the model transits to states with collision-free self-sustained
stop-and-go dynamics, see \cite{Tordeux2014}. 

\subsection{Derivation of macroscopic models}

In the following, we consider $i=1,\ldots,N$ agents with periodic boundary conditions 
(i.e.~the predecessor of the agent $N$ is the agent $1$). 
The derivation of macroscopic models from microscopic models is useful to fully understand the dynamics.   
In \cite{Aw2002}, Aw \textit{et al.} established the connection between a microscopic car-following model 
and the second-order AR macroscopic traffic flow model. 
The rigorous proof, based on a scaling limit where the time and space linearly increase 
while the speed and the density remain constant, 
assumes homogeneous conditions. 
We use here the same methodology considering the local density $\rho_i(t)$ around the vehicle or pedestrian $(i)$ 
and at time $t > 0$,  
as the inverse of the spacing  
\be
\rho_i(t) :=\frac{1}{\Delta x_i(t)}.
\label{dr}
\ee
The density could also be normalized by multiplication with $\ell.$ Here, we prefer to keep the unit of one over length as density 
to ease the comparison with the classical models.
Then, the microscopic model reads 
\be
\dot x_i(t)= W \left( \frac{1}{\rho_i(t)} -\tau \left[ W \left(\frac{1}{\rho_{i+1}(t)}\right) 
- W\left(\frac{1}{\rho_i(t)}\right) \right] \right)
=:\tilde V(\rho_{i+1}(t),\rho_i(t)),
\ee
for a velocity profile $\tilde{V}.$ Then, 
\be\label{Vtilde}
\partial_t \frac{1}{\rho_i(t)}= \partial_t \Delta x_i(t) = 
\tilde V(\rho_{i+2}(t),\rho_{i+1}(t))-\tilde V(\rho_{i+1}(t),\rho_i(t)).
\ee
In \cite{Aw2002} it has been observed that Eq.~(\ref{Vtilde}) is a semi--discretized version of hyperbolic partial differential equation
in Lagrangian coordinates. This requires to consider limits of many vehicles or pedestrians 
$N\to \infty$ and diminishing length $\ell \to 0$. 
We introduce the continuous variable $y \in \mathbb{R}$ such that $y_i=i \Delta y$ as counting variable for the number of agents 
where $\Delta y$ is proportional to $\ell$. 
By piecewise constant extension of the given spacing, we construct a density 
$\rho(t,y)$ such that 
$\displaystyle \frac{1}{\rho_i(t)} = \frac{1}{\Delta y} \int_{y_i-\frac{\Delta y}2}^{y_i+\frac{\Delta y}2} \frac{1}{\rho(t,z)} dz$.  
The quantity $\frac{1}{\rho_i(t)}$ is the volume average over a cell of length $\Delta y$ centered at $y_i$. 
The r.h.s. of \eqref{Vtilde} describes the flux across the cell boundaries. 
Introduce $V : k \mapsto V(k)=W(\frac{1}{k})$ for any $k>0$. Then, it follows that
$\tilde{V}(k_1,k_2) = V \left( \frac{k_2}{1 - k_2 \tau \left[ V(k_1) - V(k_2) \right]} \right)$
for any $(k_1,k_2) \in (0,+\infty)^2$ satisfying $V(k_1) \neq V(k_2) + \frac{1}{\tau k_2}$.
As for OVM, $W$ is non-decreasing, therefore we observe that $V$ is non-increasing on $(0,+\infty)$. 
We obtain from \eqref{Vtilde}
\be
\partial_t \frac{1}{ \rho_i(t)} - \frac{\Delta y}{ \Delta y } 
 \left[ V \left( \frac{\rho_{i+1}}{ 1- \tau \rho_{i+1} Z_{i+1}  } \right)  
 - V \left( \frac{\rho_{i}}{ 1- \tau \rho_{i} Z_i  } \right) \right] = 0,
\ee
 where $Z_i:=V( \rho_{i+1}) - V(\rho_i)$. Provided that $-V$ is increasing and $\rho(t,y)$ is piecewise constant on each cell, 
the reconstruction at
 the cell interface $y_{i \pm \frac{1}2}$ is given by cell averages, i.e., 
 $\frac{1}{\rho \left( t,y_{i+\frac{1}2} \right) }=\frac{1}{\rho_{i+1}(t)}.$ 
 Next, 
 we rescale time $t \to t   \Delta y$ and also reaction time $\tau \to \tau \Delta y$ to obtain 
\be\label{prefinal}
\partial_t \frac{1}{ \rho_i(t)} - \frac{1}{ \Delta y } 
 \left[  V \left( \frac{\rho_{i+1}}{ 1-  \tau \rho_{i+1} \frac{Z_{i+1}}{\Delta y} } \right)  -
 V \left( \frac{\rho_{i}}{ 1- \tau \rho_{i} \frac{Z_{i}}{\Delta y} } \right) \right] = 0.
\ee
 Hence, we observe that in the rescaled time and in the limit $\Delta y \to 0$ the microscopic model is an upwind discretization of the following macroscopic equation
 \be 
\partial_t\frac1\rho -\partial_y V \left( \frac{\rho}{1-\tau\rho\partial_y V(\rho)} \right)=0.
\label{mma1L}
\ee
The upwind or Godunov scheme is the most mathematically reasonable discretization
 provided $\tau$ is sufficiently small due to the decreasing behavior of $V$ for suitable OVM functions $W.$ 
Up to second--order in $\tau$ we approximate \eqref{mma1L} by Taylor expansion and obtain a convection--diffusion model as 
\be
\partial_t\frac1\rho-\partial_yV(\rho)=\tau\partial_y\big((\rho V'(\rho))^2\partial_y\rho\big).
\label{mma2L}
\ee

The relation between the density in Lagrangian coordinates and Eulerian coordinates is given by the coordinate transformation $(t,y)\to(t,x)$ 
where  $y=\int^x_{-\infty} \rho(t,x) dx.$ Note that $y$ counts the number of vehicles/pedestrians up to position $x$ in Eulerian coordinates.
In the Eulerian coordinates $(t,x)$, the macroscopic model Eq.~(\ref{mma1L}) reads
\be
\partial_t\rho + \partial_x\left(  
\rho V\left(  \frac{ \rho}{ 1-\tau \partial_x V(\rho) }
    \right)\right)=0.
		\label{mma1}
\ee
The model could be seen as an extension of the LWR model Eq.~(\ref{lwr}) 
with a modified speed-density relationship  
$\rho\mapsto V\left( \rho/(1-\tau \partial_x V(\rho)) \right)$.  
For illustrating the behavior of this modified speed-density mapping, we set $I := \tau \partial_x V(\rho))$
and we define $\mathcal V:(\rho,I)\mapsto V\left(\rho/(1-I)\right)$.
The fundamental diagrams obtained for a constant term $I \in \left\{-0.3, 0, 0.3 \right\}$ 
and for a speed function $V:\rho\mapsto\max\{0,\min\{2,1/\rho-1\}\}$ are shown on Figure~\ref{fdex}.
Note that for constant (in space) densities $\rho$ (and/or for $\tau = 0$), the additional
term $I$ vanishes and we recover the classical LWR model.

\bfig\bc\includegraphics[width=.9\textwidth]{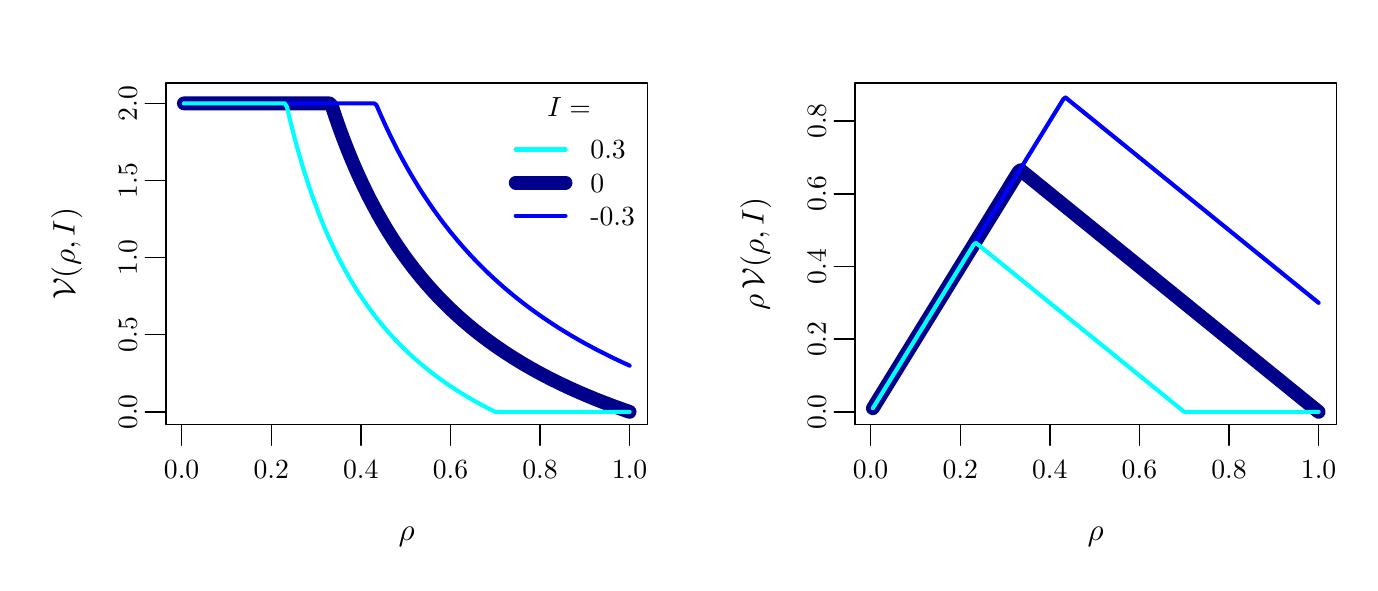}\vspace{-5mm}
\caption{Illustration for the fundamental diagram $\mathcal V:(\rho,I)\mapsto V\big(\rho/(1-I)\big)$ 
obtained in the macroscopic model~\eqref{mma1} with constant 
inhomogeneity $I\in \left\{-0.3, 0, 0.3 \right\}$ and $V:\rho\mapsto\max\{0,\min\{2,1/\rho-1\}\}$.}
\label{fdex}\ec\efig

\section{Linear stability analysis}\label{num}

\subsection{Linear stability analysis of the continuous macroscopic model}

A Taylor expansion up to second-order in terms of $\tau$ for equation~(\ref{mma1}) yields
\be
\partial_t \rho + \partial_x (\rho V(\rho))
  = -\tau \partial_x \left((\rho V'(\rho))^2 \partial_x \rho \right).
\label{mma2}
\ee
We also consider an initial condition
\be
\rho(t=0,x) = \rho_0(x) \quad \mbox{for any} \quad x \in \mathbb{R}
 \label{mma2-ini}
\ee
where $\rho_0 \in L^1(\mathbb{R}) \cap \text{BV}(\R)$.
By defining $\displaystyle D(\rho) := \int_{-\infty}^{x} -\tau (\rho V'(\rho))^2 \partial_x \rho \ \text{d}y$,
we obtain an equation similar to the one considered in~\cite{Burger2003}, say
\be\partial_t \rho + \partial_x (\rho V(\rho))
  = \partial^2_x D(\rho).\ee
One can verify that $D(\rho_0)$ is absolutely continuous on $\mathbb{R}$
and that $\partial_x D(\rho_0) \in \text{BV}(\mathbb{R})$.
In \eqref{mma2}, the l.h.s.~is the LWR model with additional diffusion 
proportional to the reaction time parameter $\tau$
and that can be either negative or positive. 
More precisely the diffusion is negative in deceleration phases where the density get higher upstream, and it is positive 
in the opposite acceleration phases. 
This type of diffusion seems to induce an instability of the homogeneous (constant) solutions and the formation of oscillations (i.e.~jam waves).  
The diffusion coefficient $(\rho V'(\rho))^2$ depends on the density and the fundamental diagram. 
In fluid dynamics the coefficient is a characteristic for the flexibility of the random movement responsible for the diffusion. 
In traffic flows, comparable diffusion-convection forms have been used in \cite{Nelson2000,Burger2003}. 
We refer the interested reader to \cite{Burger2003} (and references therein) for a proof 
of existence and uniqueness of the solution to Eq.~(\ref{mma2})-\eqref{mma2-ini}.
In the following we analyze the linear stability of homogeneous solutions for the macroscopic model.

\begin{pro}\label{stabcont}
The homogeneous configurations for which $\rho(x,t)=\rho_e$ for all $x$ and $t$ are linearly stable for the continuous 
traffic model Eq.~(\ref{mma2}) if and only if 
\be
\tau<0.
\ee
Note that a negative $\tau$ refers to an anticipation time. 
\end{pro}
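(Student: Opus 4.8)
The plan is to perform a standard linear stability (von Neumann / Fourier mode) analysis of equation~(\ref{mma2}) around the constant state $\rho_e$. First I would substitute $\rho(t,x) = \rho_e + \delta(t,x)$ with $|\delta| \ll 1$ into Eq.~(\ref{mma2}) and linearize, keeping only first-order terms in $\delta$. The convective term $\partial_x(\rho V(\rho))$ linearizes to $f'(\rho_e)\,\partial_x \delta$ where $f(\rho) = \rho V(\rho)$, i.e. the coefficient is the characteristic speed $\lambda_e := V(\rho_e) + \rho_e V'(\rho_e)$. The right-hand side $-\tau\,\partial_x\big((\rho V'(\rho))^2 \partial_x \rho\big)$ is already quadratic in derivatives of the perturbation except for the factor evaluated at $\rho_e$, so it linearizes to $-\tau\,(\rho_e V'(\rho_e))^2\,\partial_x^2 \delta =: -\tau\,\mu_e\,\partial_x^2\delta$ with $\mu_e := (\rho_e V'(\rho_e))^2 \ge 0$. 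Thus the linearized equation is the linear convection-diffusion equation $\partial_t \delta + \lambda_e \partial_x \delta = -\tau\,\mu_e\,\partial_x^2 \delta$.

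Next I would insert a Fourier mode $\delta(t,x) = \hat\delta\, e^{i k x + \sigma(k) t}$ for wavenumber $k \in \mathbb{R}$ and solve for the growth rate, obtaining $\sigma(k) = -i k \lambda_e - \tau\,\mu_e\, k^2$. Stability of the homogeneous state is equivalent to $\mathrm{Re}\,\sigma(k) \le 0$ for all $k$ (and strictly negative for $k \neq 0$ for asymptotic stability), which reads $-\tau\,\mu_e\, k^2 \le 0$ for all $k$. Since $\mu_e \ge 0$, this holds for every $k$ iff $\tau \ge 0$ in the non-strict sense; however, for the case $\rho_e V'(\rho_e) \neq 0$ (the generic case on the congested branch) we have $\mu_e > 0$, and then strict stability $\mathrm{Re}\,\sigma(k) < 0$ for all $k \neq 0$ holds iff $\tau < 0$ — the \emph{anti-diffusive} sign $\tau > 0$ makes all nonzero modes grow, so the state is unstable. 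This gives the claimed criterion $\tau < 0$; I would note that the degenerate case $\mu_e = 0$ (e.g. $\rho_e$ on the free-flow branch where $V' = 0$) is marginal at the linear level and can be dismissed as non-generic or treated separately with a remark.

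The one genuinely delicate point is the interpretation: a backward heat equation ($\tau > 0$) is ill-posed, so strictly speaking the linearization has unbounded growth rates and ``instability'' should be understood mode-by-mode as $\mathrm{Re}\,\sigma(k) \to +\infty$; conversely $\tau < 0$ gives a well-posed, exponentially damping equation. I would state this carefully, emphasizing that the sign of $\tau\,\mu_e$ is exactly what flips the diffusion direction, and that this matches the informal discussion preceding the proposition (negative diffusion in deceleration phases destabilizes). A secondary minor obstacle is bookkeeping the linearization of the nonlinear diffusion term to confirm that no additional first-order-in-$\delta$ contributions arise from differentiating $(\rho V'(\rho))^2$; since every such contribution carries an extra factor of $\partial_x \rho = \partial_x \delta$, it is quadratic and drops out, so the linearized diffusion coefficient is simply the frozen value $\tau\mu_e$. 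With these observations the equivalence $\tau < 0 \iff$ linear stability follows immediately.
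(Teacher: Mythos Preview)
Your approach is the same as the paper's---linearize around $\rho_e$ and test Fourier modes---but there is a sign slip in your dispersion relation that makes the paragraph internally contradictory. Substituting $\delta=\hat\delta\,e^{ikx+\sigma t}$ into $\partial_t\delta+\lambda_e\partial_x\delta=-\tau\mu_e\,\partial_x^2\delta$ gives
\[
\sigma+ik\lambda_e=-\tau\mu_e(ik)^2=+\tau\mu_e k^2,
\qquad\text{hence}\qquad \Re\sigma(k)=+\tau\mu_e k^2,
\]
not $-\tau\mu_e k^2$ as you wrote. (This matches the paper's $\Re(\lambda_l)=\tau(l\rho_eV'(\rho_e))^2$.) Your own text reflects the confusion: from your formula $\Re\sigma=-\tau\mu_e k^2$ you first deduce that non-strict stability holds iff $\tau\ge0$, and in the very next clause that strict stability holds iff $\tau<0$; both statements cannot be correct simultaneously, and in fact with your sign the second one is false.

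With the corrected sign the argument is immediate and identical to the paper's: $\Re\sigma(k)<0$ for all $k\neq0$ iff $\tau\mu_e<0$, i.e.\ (for $\mu_e>0$) iff $\tau<0$. The remaining remarks you make---that the degenerate case $\mu_e=0$ is marginal, that for $\tau>0$ the linearization is a backward heat equation with unbounded growth rates, and that the product-rule terms from differentiating $(\rho V'(\rho))^2$ are quadratic in $\partial_x\delta$ and drop out---are all correct and useful observations that the paper does not spell out.
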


\begin{proof}% of Proposition~\ref{stabcont} 
If $\per(x,t)=\rho(x,t)-\rho_e$ is a perturbation to homogeneous solution $\rho_e$, then 
\be
\per_t=F(\rho_e+\per,\per_x,\per_{xx})=\alpha\per+\beta\per_x+\gamma\per_{xx}+o(LC(\per,\per_x,\per_{xx})),
\label{modp}
\ee
with $F(\rho,\rho_x,\rho_{xx})=-\partial_x \big(\rho V(\rho)-\tau(\rho V'(\rho))^2\partial_x\rho\big)$, 
$\alpha=\frac{\partial F}{\partial \rho}(\rho_e,\rho_e,\rho_e)=0$, 
$\beta=\frac{\partial F}{\partial \rho_x}=-V(\rho_e)-\rho_eV'(\rho_e)$,  
$\gamma=\frac{\partial F}{\partial \rho_{xx}}=-\tau(\rho_eV'(\rho_e))^2$. 
The solutions of the linear system are the Ansatz $\per=z e^{\lambda t-ixl}$ where 
$\lambda\in\mathbb C$, $x,l\in\mathbb R$. 
We get $\per_t=\lambda\per$, $\per_x=-il\per$ and $\per_{xx}=-l^2\per$. 
Therefore the characteristic equation of the perturbed system Eq.~(\ref{modp}) is 
$\lambda_l=\tau (l\rho_eV'(\rho_e))^2+il(V(\rho_e)+\rho_eV'(\rho_e))$. 
The homogeneous solution are stable when $\varepsilon\fle0$, i.e.~$\Re(\lambda_l)<0$ for all $l>0$. 
This holds only if the diffusion is positive. This is $\tau<0$. 
\eof 

Therefore the macroscopic model is unstable as soon as the reaction time $\tau$ is positive which is 
the physically reasonable case. An explanation is that the Taylor expansion of the original model
in terms of $\tau$ does lead to  a perturbed equation with different properties. However,
the discrete model does not have this stability requirement. Therefore, we show below that 
for suitable discretization of the model we recover stability. 

\end{proof}

\subsection{Linear stability analysis for the discrete schemes}

Discretizations of the macroscopic models Eqs.~(\ref{mma1L}) and (\ref{mma2L}) in `Lagrangian' coordinates 
give the initial microscopic model Eq.~(\ref{mmi}).
%$$\dot x_i=W\left(\frac{1}{\rho_i}\right)-\tau W'\left(\frac{1}{\rho_i}\right)\left(W\left(\frac{1}{\rho_{i+1}}\right)-W\left(\frac{1}{\rho_i}\right)\right)$$
Our purpose in this section is the discretization of the macroscopic models Eqs.~(\ref{mma1}) and (\ref{mma2}) 
in Eulerian coordinates. 
We denote $\dt$ and $\dx$ the time and spatial discretization steps 
and use the Godunov scheme \cite{Godunov1959aa} for the discretization of the density
% explicit Euler scheme is for ODEs ???
\be
\rho_i(t+\dt)=\rho_i(t)+\frac{\dt}{\dx}\big(f_{i-1}(t)-f_i(t)\big)
\label{d0}
\ee
where $f_i$ denotes the flow at cell boundary and has to be determined.
For this aim, we introduce the \emph{demand} and \emph{supply} functions
from the flow-density fundamental diagram $f : \rho \mapsto f(\rho) := \rho V(\rho)$ 
as first proposed in \cite{Daganzo1995,Lebacque1996} and that read respectively
\be\Delta (\rho) := \max_{k \leq \rho} f(k) \quad \mbox{and} \quad \Sigma(\rho) := \max_{k \geq \rho} f(k)\ee
and we define the Godunov flux as $G(x,y) :=\min\{\Delta(x),\Sigma(y)\}$.
We are now ready to propose three different strategies to compute the boundary flows $f_i$. 
The first two methods discretize the linearized model Eq.~(\ref{mma2}) using a splitting scheme 
which treats separately the convection and the diffusion terms. 
The last scheme is a simple discretization of the exact macroscopic model Eq.~(\ref{mma1}).
\ben
\item \textbf{The Godunov/Euler scheme:} a Godunov scheme for the convection term and an explicit Euler scheme for the diffusive term 
of the linearised model Eq.~(\ref{mma2}):
\be
f_i^{(1)}=G(\rho_{i},\rho_{i+1})+\frac{\tau}{\dx}(\rho_i V'(\rho_i))^2(\rho_{i+1}-\rho_i).
\label{d1}
\ee
Such a scheme is the one used in \cite{Aw2002}.

\item \textbf{The Godunov/Godunov scheme:} a Godunov scheme for the convection term and a Godunov scheme for the diffusion term 
of the Taylor--expanded model Eq.~(\ref{mma2}):
\be
f_i^{(2)}=G(\rho_{i},\rho_{i+1})+
\frac{\tau}{\dx}\rho_i V'(\rho_i)\big[G(\rho_{i+1},\rho_{i+2})-G(\rho_{i},\rho_{i+1})\big].
\label{d2}
\ee

\item \textbf{The Godunov scheme:} a Godunov scheme for the modified convection term in the exact macroscopic model Eq.~(\ref{mma1}):
\be
f_i^{(3)}=G\left(\frac{\rho_{i}}{1-\frac{\tau}{\dxb}\big(V(\rho_{i+1})-V(\rho_{i})\big)},
\frac{\rho_{i+1}}{1-\frac{\tau}{\dxb}\big(V(\rho_{i+2})-V(\rho_{i+1})\big)}\right).
\label{d3}\ee
Note that this scheme is valid if $1-\frac{\tau}{\dxb}\big(V(\rho_{i+1})-V(\rho_{i})\big)>0$ for all $\rho_i$ and $\rho_{i+1}$. 
By denoting $V_0=\sup_x V(x)$, this inequality holds if 
\be\tau <\dx/V_0.
\label{CFL}
\ee 
\een

%We analyse in this section the linear stability of the homogeneous solution for the numerical schemes Eq.~(\ref{d1}), Eq.~(\ref{d2}) and Eq.~(\ref{d3}). 

\begin{pro}\label{stabdisc}
The homogeneous configurations for which $\rho_i(t)=\rho_e$ for all $i$ and $t$ are linearly stable for the 
discrete traffic model Eq.~\ref{d0} if and only if 
\be 
\alpha^2+\beta^2+\gamma^2+\xi^2-2\alpha\gamma-2\beta\xi+2f(c_l)<1,\qquad \forall l=1,\ldots,N-1,
\label{CG}
\ee 
%$(\lambda_l,l=0,\ldots,N-1)$ are defined therefore $D$ exists\\
with $c_l=\cos(2\pi l/N)$ and $f(x)=(\alpha\beta+\alpha\xi+\beta\xi-3\gamma\xi) x+2(\alpha\gamma+\beta\xi) x^2+4\gamma\xi x^3$, 
and $\alpha=\frac{\partial F}{\partial \rho_i}$, $\beta=\frac{\partial F}{\partial \rho_{i+1}}$, 
$\gamma=\frac{\partial F}{\partial \rho_{i+2}}$ and $\xi=\frac{\partial F}{\partial \rho_{i-1}}$ 
the partial derivatives of the model in equilibrium.
\end{pro}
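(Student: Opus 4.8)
The plan is a von Neumann (discrete Fourier) stability analysis, paralleling the argument used for the continuous model in Proposition~\ref{stabcont}. First I would put the scheme into a standard multi-point form: whichever of the three fluxes $f_i^{(1)}$, $f_i^{(2)}$ or $f_i^{(3)}$ is inserted into \eqref{d0}, the update of cell $i$ involves only the cells $i-1,i,i+1,i+2$ (only $i-1,i,i+1$ for $f^{(1)}$, i.e.\ $\gamma=0$ there), so by translation invariance of the flux formulas the scheme reads $\rho_i(t+\dt)=F\big(\rho_{i-1}(t),\rho_i(t),\rho_{i+1}(t),\rho_{i+2}(t)\big)$ with $F$ independent of $i$ and $F(\rho_e,\rho_e,\rho_e,\rho_e)=\rho_e$ at every equilibrium. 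I would note here that $F$ is $C^1$ near $(\rho_e,\rho_e,\rho_e,\rho_e)$ precisely when $\rho_e$ lies where $W$ — hence $V$, the demand $\Delta$, the supply $\Sigma$ and the Godunov flux $G$ — is differentiable, i.e.\ away from the kinks of the fundamental diagram and from the capacity density where $\Delta$ and $\Sigma$ meet (and, for $f^{(3)}$, provided \eqref{CFL} holds so that $F$ is even well defined); this is the implicit standing assumption behind the statement.

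Next I would linearise. Writing $\rho_i(t)=\rho_e+\per_i(t)$ with the $\per_i$ small, a first-order Taylor expansion of $F$ gives the constant-coefficient recursion
\be
\per_i(t+\dt)=\xi\,\per_{i-1}(t)+\alpha\,\per_i(t)+\beta\,\per_{i+1}(t)+\gamma\,\per_{i+2}(t)+o\big(\max_j|\per_j(t)|\big),
\ee
with $\alpha,\beta,\gamma,\xi$ the partial derivatives of $F$ at equilibrium as in the statement; consistency of the conservative scheme moreover forces $\xi+\alpha+\beta+\gamma=1$, since perturbing every density by the same constant merely relabels the equilibrium. With $N$ agents and periodic boundary conditions the linear part is a circulant operator on $\Z/N\Z$, hence diagonalised by the discrete Fourier modes: inserting the Ansatz $\per_i(t)=z\,\mu_l^{\,t/\dt}\,\omega_l^{\,i}$ with $\omega_l:=e^{2\pi\mathbf{i}l/N}$, $z\in\mathbb C$ and $l=0,\dots,N-1$ collapses the recursion to the scalar amplification relation $\mu_l=\xi\,\omega_l^{-1}+\alpha+\beta\,\omega_l+\gamma\,\omega_l^{2}$. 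The perturbation obeys $\per\fle0$ iff $|\mu_l|<1$ for every excited mode; the mode $l=0$ always gives $\mu_0=\xi+\alpha+\beta+\gamma=1$ and is neutral — it merely slides $\rho_e$ along the one-parameter family of equilibria — so linear stability amounts exactly to $|\mu_l|<1$ for $l=1,\dots,N-1$.

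It then remains to compute $|\mu_l|^2=\mu_l\overline{\mu_l}$ and to regroup it as a polynomial in $c_l:=\cos(2\pi l/N)=\tfrac12(\omega_l+\omega_l^{-1})$. Using $\omega_l^{2}+\omega_l^{-2}=2(2c_l^2-1)$ and $\omega_l^{3}+\omega_l^{-3}=2(4c_l^3-3c_l)$, all imaginary parts cancel and one is left with a cubic in $c_l$; matching it against $\alpha^2+\beta^2+\gamma^2+\xi^2-2\alpha\gamma-2\beta\xi+2f(c_l)$ recovers the polynomial $f$ of the statement. A convenient check is that at $c_l=1$ the expression must collapse to $(\alpha+\beta+\gamma+\xi)^2=1$, consistent with the neutrality of the $l=0$ mode, which pins down the cross-term coefficients. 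Consequently $|\mu_l|^2<1$ is exactly inequality~\eqref{CG}, so requiring \eqref{CG} for all $l=1,\dots,N-1$ is equivalent to linear stability of the homogeneous state, and a violation for some $l$ yields a growing mode, hence instability; the $o(\cdot)$ remainder is irrelevant at the linear level.

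I expect the only laborious step to be the last one — the careful expansion of $|\mu_l|^2$ into its products and the collection of the coefficients of $1,c_l,c_l^2,c_l^3$ through the multiple-angle identities. Everything else is bookkeeping, with two points that warrant a sentence rather than a computation: the differentiability of $F$ at the chosen equilibrium (so the linearisation exists), and the identification and exclusion of the marginal $l=0$ Fourier mode that mass conservation always produces.
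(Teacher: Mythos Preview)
Your proposal is correct and follows essentially the same route as the paper: linearise the four-point update about $\rho_e$, diagonalise the resulting circulant recursion by discrete Fourier modes to get the amplification factor $\mu_l=\alpha+\beta\omega_l+\gamma\omega_l^{2}+\xi\omega_l^{-1}$, and rewrite $|\mu_l|^2$ as a cubic in $c_l$ via the multiple-angle identities. Your extra remarks on the differentiability hypothesis, the consistency relation $\alpha+\beta+\gamma+\xi=1$, and the neutrality of the $l=0$ mode are welcome clarifications the paper leaves implicit; as a side benefit, carrying out your expansion carefully would also flag that the linear coefficient in $f$ should read $\alpha\beta+\alpha\xi+\beta\gamma-3\gamma\xi$ rather than $\alpha\beta+\alpha\xi+\beta\xi-3\gamma\xi$.
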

\begin{proof}% of Proposition~\ref{stabdisc}
The perturbations to homogeneous solution are the variables $\per_i(t)=\rho_i(t)-\rho_e$. 
The perturbed system is
\be\begin{array}{lcl}
\per_i(t+\dt)&=&\rho_i(t+\dt)-\rho_e=
F(\rho_i(t),\rho_{i+1}(t),\rho_{i+2}(t),\rho_{i-1}(t))-\rho_e\\[2mm]
%&=&F\big(\rho_e+\per_i(t),\rho_e+\per_{i+1}(t),\rho_e+\per_{i+2}(t),\rho_e+\per_{i-1}(t)\big)\\[1mm]
%&&\hspace{5.5cm}-F(\rho_e,\rho_e,\rho_e,\rho_e)\\[1.7mm]
&=&\alpha\,\per_i(t)+\beta\,\per_{i+1}(t)+\gamma\,\per_{i+2}(t)+\xi\,\per_{i-1}(t)+o(\mbox{LC}(\per_{i},\per_{i-1},\per_{i+1},\per_{i+2})),
\end{array}\ee
with $\alpha=\frac{\partial F}{\partial \rho_i}$, $\beta=\frac{\partial F}{\partial \rho_{i+1}}$, 
$\gamma=\frac{\partial F}{\partial \rho_{i+2}}$ and $\xi=\frac{\partial F}{\partial \rho_{i-1}}$ at
$(\rho_e,\rho_e,\rho_e,\rho_e)$. 
%$$\quad\begin{array}{lcl}
%\alpha&=&\frac{\partial F}{\partial \rho_i}(\rho_e,\rho_e,\rho_e,\rho_e)\\[2mm]
%\beta&=&\frac{\partial F}{\partial \rho_{i+1}}(\rho_e,\rho_e,\rho_e,\rho_e)\end{array}
%\qquad\begin{array}{lcl}
%\gamma&=&\frac{\partial F}{\partial \rho_{i+2}}(\rho_e,\rho_e,\rho_e,\rho_e)\\[2mm]
%\xi&=&\frac{\partial F}{\partial \rho_{i-1}}(\rho_e,\rho_e,\rho_e,\rho_e)\end{array}$$
General conditions for the global stability of the discrete schemes can be obtained for a system 
of $N$ cells with periodic boundary conditions.  
The linear perturbed system is 
$\vec\per\,(t+\dt)=M\,\vec\per\,(t)$,
with $\vec\per=\,^{\mbox{\tiny T}}(\per_1,\ldots,\per_N)$ and 
$M$ a sparse matrix with $(\xi,\alpha,\beta,\gamma)$ on the diagonal. 
%$M=\bmat \alpha&\beta&\gamma&&\\[2mm]%&\alpha&\beta&\gamma&&\\[4mm]&&\alpha&\beta&\gamma\\\gamma&&&\alpha&\beta\\\beta&\gamma&&&\alpha\emat$\\[2mm]
If $M=PDP^{-1}$ with $D$ a diagonal matrix, then $\vec\per\,(t)=PD^{t/\dtb}P^{-1}\,\vec\per\,(0)\fle\vec0$ 
if all the coefficients of $D$ are less than one excepted one equal to 1. 
$M$ is circulant therefore the eigenvectors of $M$ are $z(\iota^0,\iota^1,\ldots,\iota^{m-1})$ 
with $\iota=\exp\left(i\frac{2\pi l}{N}\right)$ and $z\in\mathbb Z$, and the eigenvalues are 
$\lambda_l=\alpha+\beta\iota_l+\gamma\iota_l^2+\xi\iota_l^{-1}$.  
The system is linearly stable if $|\lambda_l|<1$ for all $l=1,\ldots,N-1$. 
This is 
\be \lambda_l^2=\alpha^2+\beta^2+\gamma^2+\xi^2-2\alpha\gamma-2\beta\xi+2f(c_l)<1,\qquad \forall l=1,\ldots,N-1,
\label{CG2}
\ee 
%$(\lambda_l,l=0,\ldots,N-1)$ are defined therefore $D$ exists\\
with $c_l=\cos(2\pi l/N)$ and $f(x)=(\alpha\beta+\alpha\xi+\beta\xi-3\gamma\xi) x+2(\alpha\gamma+\beta\xi) x^2+4\gamma\xi x^3$.
\eof

These conditions Eq.~(\ref{CG}) are applied to the different numerical schemes Eq.~(\ref{d1}), Eq.~(\ref{d2}) and Eq.~(\ref{d3})  
with optimal speed $V(\rho)=\frac1T(1/\rho-\ell)$, $T>0$ being the vehicles time gap and $\ell\ge0$ the vehicle' size. 
For such speed function, the Godunov scheme is simply $G(x,y)=\frac1T(1-y\ell)$. 

\end{proof}

%{\em \color{red} COULD WE SUMMARIZE THE FINDINGS BELOW IN TWO LEMMA? ONE FOR GODUNOV/EULER AND ONE FOR GODUNOV? 
%THIS WOULD HELP THE READING. COULD YOU PLEASE DO THAT? THANKS}

\begin{lem}\label{schemeD1}
The homogeneous configurations are linearly stable for the 
Godunov-Euler scheme Eqs.~(\ref{d0}-\ref{d1})  if 
\be
2\tau<T\ell\,\mbox{\emph{$\dx$}}\,\rho_e^2,
\label{CSd1}
\ee
and if \mbox{\emph{$\dt$}} is sufficiently small.
\end{lem}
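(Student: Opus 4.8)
The plan is to feed the scheme~(\ref{d1}) into the general criterion~(\ref{CG}) of Proposition~\ref{stabdisc}, specialized to the linear velocity $V(\rho)=\frac1T(1/\rho-\ell)$. Here $V'(\rho)=-1/(T\rho^2)$, hence $(\rho V'(\rho))^2=1/(T^2\rho^2)$, and the Godunov flux reduces to $G(x,y)=\frac1T(1-\ell y)$; in particular $G$ is affine near the equilibrium, so the one-step map $\rho_i(t+\dt)=F(\rho_{i-1},\rho_i,\rho_{i+1})$ with $F=\rho_i+\frac{\dt}{\dx}\big(f_{i-1}^{(1)}-f_i^{(1)}\big)$ is smooth there and the linearization is legitimate. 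First I would note that $F$ does not involve $\rho_{i+2}$, so in the notation of Proposition~\ref{stabdisc} we have $\gamma=0$. Differentiating at $\rho_i\equiv\rho_e$ and introducing the two dimensionless, $O(\dt)$ parameters $\nu:=\frac{\ell\,\dt}{T\,\dx}$ and $\mu:=\frac{\tau\,\dt}{T^2\dx^2\rho_e^2}$, a direct computation gives
\[
\alpha=1-\nu+2\mu,\qquad \beta=\nu-\mu,\qquad \gamma=0,\qquad \xi=-\mu .
\]
In particular $\alpha+\beta+\xi=1$, which is just conservativity of the scheme at the constant state.

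Next I would specialize the eigenvalue $\lambda_l=\alpha+\beta\iota_l+\gamma\iota_l^2+\xi\iota_l^{-1}$ of Proposition~\ref{stabdisc} to $\gamma=0$, obtaining $\lambda_l=\alpha+\beta\iota_l+\xi\iota_l^{-1}$ and hence $|\lambda_l|^2=g(c_l)$ with the quadratic
\[
g(c)=\alpha^2+\beta^2+\xi^2-2\beta\xi+2(\alpha\beta+\alpha\xi)c+4\beta\xi c^2 ,\qquad c_l=\cos(2\pi l/N)\in[-1,1).
\]
Using $\alpha+\beta+\xi=1$ one checks that $c=1$ is a root of $g(c)-1$; factoring it out gives $g(c)-1=2(c-1)\,\phi(c)$ with the affine function $\phi(c)=2\beta\xi(c+1)+\alpha(\beta+\xi)$. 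Since $c_l-1<0$ for $l=1,\dots,N-1$, the stability requirement $|\lambda_l|^2<1$ is equivalent to $\phi(c_l)>0$; as $\phi$ is affine it suffices to have $\phi(-1)>0$ and $\phi(1)>0$, and the argument is uniform in the sign of $\tau$.

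Finally I would evaluate the endpoints. One has $\phi(-1)=\alpha(\beta+\xi)=\alpha(\nu-2\mu)$ and $\phi(1)=4\beta\xi+\alpha(\beta+\xi)=4\mu(\mu-\nu)+\alpha(\nu-2\mu)$. For $\dt$ small, $\alpha=1-\nu+2\mu>0$, so $\phi(-1)>0$ reduces to $\nu>2\mu$, i.e.\ $\frac{\ell\,\dt}{T\,\dx}>\frac{2\tau\,\dt}{T^2\dx^2\rho_e^2}$, which after cancelling $\dt>0$ and multiplying by $T^2\dx^2\rho_e^2$ is precisely $2\tau<T\ell\,\dx\,\rho_e^2$, i.e.\ condition~(\ref{CSd1}). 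For $\phi(1)$, the term $4\mu(\mu-\nu)$ is $O(\dt^2)$ while $\alpha(\nu-2\mu)$ is, under~(\ref{CSd1}), strictly positive and of order $\dt$, so $\phi(1)>0$ once $\dt$ is small enough. Hence $\phi>0$ on $[-1,1]$, so $g(c_l)<1$ and $|\lambda_l|<1$ for all $l=1,\dots,N-1$, which is the claimed linear stability.

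The computations are routine; the one step that needs care is recognizing, via the conservativity identity $\alpha+\beta+\xi=1$, that the quadratic-in-$c_l$ inequality~(\ref{CG}) collapses to the affine inequality $\phi\ge 0$. Once that reduction is made, the endpoint $c_l=-1$ produces exactly~(\ref{CSd1}), whereas the endpoint $c_l=+1$ (together with the sign condition $\alpha>0$) is what forces the extra hypothesis that $\dt$ be sufficiently small — a reflection of the usual $\dt\lesssim\dx^2$ restriction attached to the explicit Euler treatment of the diffusion term in~(\ref{d1}).
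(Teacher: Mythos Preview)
Your argument is correct and computes the same linearization coefficients as the paper (your $\nu,\mu$ are the paper's $A,B$, so $\alpha=1-A+2B$, $\beta=A-B$, $\gamma=0$, $\xi=-B$). The difference is in how the spectral condition is reduced. The paper keeps the quadratic $f(c)=\alpha(1-\alpha)c+2\beta\xi c^2$, splits into the cases $\tau<0$, $0<\tau<\tfrac12 T\ell\,\dx\,\rho_e^2$, $\tfrac12 T\ell\,\dx\,\rho_e^2<\tau<T\ell\,\dx\,\rho_e^2$ and $\tau>T\ell\,\dx\,\rho_e^2$, and in each case locates the maximum of $f$ on $[-1,1]$ via convexity/concavity to decide stability; this yields, besides~(\ref{CSd1}), the explicit time-step bound $\dt<\tfrac{T\dx}{\ell}-\tfrac{2\tau}{(\ell\rho_e)^2}$ and information on the unstable wavelength when~(\ref{CSd1}) fails. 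You instead use the conservativity identity $\alpha+\beta+\xi=1$ to factor $|\lambda_l|^2-1=2(c_l-1)\phi(c_l)$ with $\phi$ affine, so stability becomes simply $\phi(-1)>0$ and $\phi(1)>0$, uniformly in the sign of $\tau$; your $\phi(-1)>0$ gives~(\ref{CSd1}) and your $\phi(1)>0$ is in fact exactly the paper's explicit $\dt$-bound (expanding $\phi(1)=\nu(1-\nu)-2\mu$). So your route is shorter and avoids the case analysis, while the paper's route makes the sharp $\dt$-threshold and the behaviour in the unstable regime more visible.
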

\begin{proof}% of Lemma~\ref{schemeD1}
Mixed with the scheme for the density Eq.~(\ref{d0}), Godunov/Euler scheme Eq.~(\ref{d1}) is  
\be
F_1(\rho_{i},\rho_{i+1},\rho_{i+2},\rho_{i-1})=
\rho_i+\frac{\dt}{T\dx}\left(\ell(\rho_{i+1}-\rho_i)+\frac{\tau}{T\dx}\left(\frac{\rho_{i}-\rho_{i-1}}{\rho_{i-1}^2}
-\frac{\rho_{i+1}-\rho_{i}}{\rho_{i}^2}\right)\right),
\label{F1}
\ee
where $\alpha=1-A+2B$, $\beta=A-B$, $\gamma=0$ and $\xi=-B$ 
with $A=\frac{\dtb \ell}{T\dxb}$ and $B=\frac{\dtb\tau}{(T\dxb\rho_e)^2}$.

If $\tau<\frac12T\ell\dx\rho_e^2$, then $\alpha>0$ for
\be 
\dt<\frac{T\dx}{\ell+\frac{2\tau}{T\dxb\rho_e^2}},
\label{Pal2}\ee
 % --- (\ref{Pal}) is stronger than (\ref{L2});
and for all $\dt\ge0$ if $\tau\ge\frac12T\ell\dx\rho_e^2$. 
Moreover $1-\alpha>0$ if $\tau<\frac12T\ell\dx\rho_e^2$ while 
$\beta$ is positive only if $\tau<T\ell\dx \rho_e^2$ 
%(\ref{Pbe}) is stronger than (\ref{L1})\\[4mm]
and the sign of $\xi$ is the one of $-\tau$.

The stability conditions are distinguished according to the sign of $\tau$.
\bi
\item If $\tau<0$ and Eq.~(\ref{Pal2}) holds, 
then $f(x)=\alpha(1-\alpha)x+2\beta\xi x^2$ is strictly convex and is maximal on $[-1,1]$ for $x=-1$ or $x=1$. 
Therefore the model is stable if $f(-1)<f(1)$; this is simply $-\alpha(1-\alpha)<\alpha(1-\alpha)$ 
that is always true since $\alpha>0$ if (\ref{Pal2}) holds and $1-\alpha>0$ on $\tau<0$.
Therefore the system is stable for all $\tau<0$.

\item Several cases have to be distinguished for $\tau>0$. 
We assume in the following that Eq.~(\ref{Pal2}) holds. 
\bi
\item For $0<\tau<\frac12T\ell\dx\rho_e^2$, we have $\alpha,1-\alpha,\beta>0$, $\xi<0$ and
$f(x)=\alpha(1-\alpha)x+2\beta\xi x^2$ is strictly concave and maximal for $x_0=-\frac{\alpha(1-\alpha)}{4\beta\xi}>0$.   
The model is stable if $x_0>1$, this is 
\be\dt<\frac{T\dx}{\ell}-\frac{2\tau}{(\ell\rho_e)^2}.
\ee
This condition is more restrictive than Eq.~(\ref{Pal2}). 
\item For $\frac12T\ell\dx\rho_e^2<\tau<T\ell\dx\rho_e^2$, we have $\alpha,\beta>0$, $1-\alpha,\xi<0$ and 
$f(-1)>f(1)$ therefore the model is unstable. 
More precisely, $f$ maximal for $x_0<-1$, i.e. the unstable solution have shortest wavelength if
\be
\dt<\left(\tau-\frac12T\ell\dx\rho_e^2\right)\left(\frac{2\tau}{T\dx\rho_e}\right)^{-2}.
\ee
This condition is also more restrictive than Eq.~(\ref{Pal2}). 
\item For $\tau>T\ell\dx\rho_e^2$, we have $\alpha>0$, $1-\alpha,\beta,\xi<0$ and the system is 
unstable for all $\dt$ with shortest wavelength since $f(\cdot)$ strictly convex and $f(-1)>f(1)$.\eof
\ei\ei

The stability conditions for the Godunov/Euler splitting scheme Eq.~(\ref{d1}) are summarised in Fig.~\ref{sum}.
The same conditions as the continuous macroscopic model are obtained for 
$\dx\fle0$ and $\dt\fle0$ such that $\dt/\dx\fle0$.

\bfig\bc\vspace{5mm}
\includegraphics[width=\textwidth]{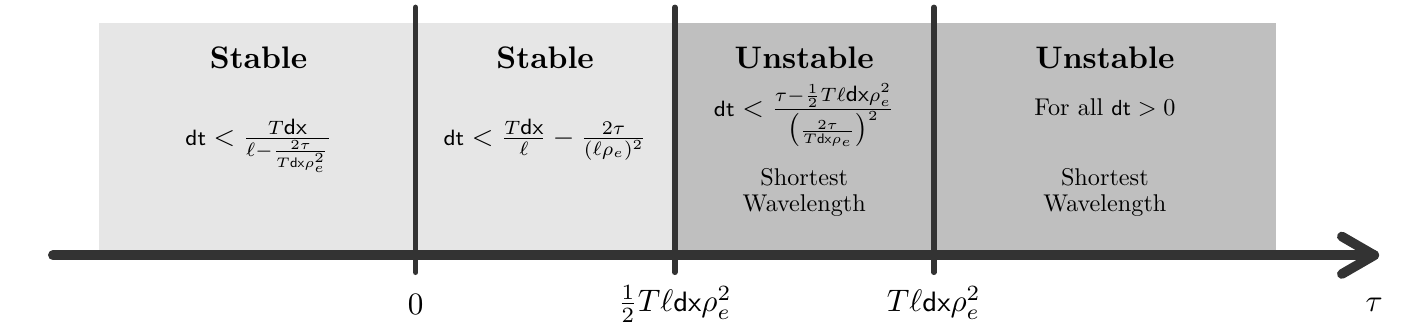}\vspace{-5mm}
\caption{Summary of the stability conditions for the Godunov/Euler splitting scheme Eq.~(\ref{d1}).}
\label{sum}\ec\efig

\end{proof}

%\subsubsection{Godunov schemes}

\begin{lem}\label{schemeD2-3}
The homogeneous configurations are linearly stable for the 
Godunov-Euler schemes Eqs.~(\ref{d0}-\ref{d2}) and Eqs.~(\ref{d0}-\ref{d3})  if
\be
2|\tau|<T\,\mbox{\emph{$\dx$}}\,\rho_e,
\label{CSd2-3}
\ee
and if \mbox{\emph{$\dt$}} is sufficiently small.
\end{lem}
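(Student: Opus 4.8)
The plan is to reuse the template of Lemma~\ref{schemeD1}. First I would observe that the schemes \eqref{d2} and \eqref{d3} have the \emph{same} linearisation about a homogeneous state $\rho=\rho_e$, so that one computation covers both. Indeed, in \eqref{d3} the density correction enters through factors $\big(1-\tfrac{\tau}{\dx}(V(\rho_{j+1})-V(\rho_j))\big)^{-1}$; writing $\per_j=\rho_j-\rho_e$ and using $V(\rho_{j+1})-V(\rho_j)=V'(\rho_e)(\per_{j+1}-\per_j)+o(\per)$ one Taylor-expands the fraction and is left, to first order, with a single term $\rho_e\ell\,\tfrac{\tau}{\dx}V'(\rho_e)(\per_{i+2}-\per_{i+1})$, which coincides with the first-order part of \eqref{d2} once $\rho_iV'(\rho_i)$ is frozen at $\rho_eV'(\rho_e)$ there. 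With the linear-in-spacing optimal speed $V(\rho)=\tfrac1T(1/\rho-\ell)$ (so $\rho V'(\rho)=-1/(T\rho)$) and the associated Godunov flux $G(x,y)=\tfrac1T(1-y\ell)$ recalled above, I would then insert the linearised boundary flux into the Godunov update \eqref{d0}.

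A direct differentiation gives the linearised map $\per_i(t+\dt)=\alpha\per_i+\beta\per_{i+1}+\gamma\per_{i+2}+\xi\per_{i-1}+o(\per)$ with
\[\alpha=1-A-B,\qquad \beta=A+2B,\qquad \gamma=-B,\qquad \xi=0,\]
where $A=\tfrac{\dt\,\ell}{T\dx}$ and $B=\tfrac{\dt\,\tau\,\ell}{(T\dx)^2\rho_e}$; in particular $\operatorname{sgn}B=\operatorname{sgn}\tau$ and $\alpha+\beta+\gamma+\xi=1$ (conservation of mass, i.e.\ the Fourier mode $l=0$ carries the eigenvalue $1$). Feeding this into the circulant-matrix criterion of Proposition~\ref{stabdisc}, and noting that here $\xi=0$ (whereas Lemma~\ref{schemeD1} had $\gamma=0$, $\xi\neq0$), the eigenvalues become $\lambda_l=\alpha+\beta\iota_l+\gamma\iota_l^2$ with $\iota_l=e^{2\pi i l/N}$, and using $\alpha+\beta+\gamma=1$ one can factor
\[|\lambda_l|^2-1=-2\,(1-c_l)\,\big[\beta(1-\beta)+2\alpha\gamma(1+c_l)\big],\qquad c_l=\cos(2\pi l/N).\]
Since $c_l<1$ for $l=1,\dots,N-1$, linear stability is equivalent to $h(c_l):=\beta(1-\beta)+2\alpha\gamma(1+c_l)>0$ for all such $l$.

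It then remains to analyse the affine function $h$. Its slope $2\alpha\gamma=-2\alpha B$ has the sign of $-\tau$ as soon as $\dt$ is small enough that $\alpha>0$, so over $c_l\in[-1,1)$ the minimum of $h$ is approached near $c_l=-1$ when $\tau<0$ and near $c_l=1$ when $\tau>0$. For $\tau<0$ this yields $h(-1)=\beta(1-\beta)$, positive iff $0<\beta<1$; since $\beta=A+2B<A<1$ for small $\dt$, the only real requirement is $\beta>0$, i.e.\ $A>-2B$, which is exactly $2|\tau|<T\dx\rho_e$. For $\tau>0$ the binding value is $h(1)=\beta(1-\beta)+4\alpha\gamma=A(1-A)-2B$; dividing by $A>0$, the inequality $h(1)>0$ reads $\dt<\tfrac{T\dx}{\ell}\big(1-\tfrac{2\tau}{T\dx\rho_e}\big)$, whose right-hand side is positive precisely when $2\tau<T\dx\rho_e$, and since that threshold does not depend on $N$ the phrase ``$\dt$ sufficiently small'' can be taken uniformly in the number of cells (for $\tau=0$ one recovers the plain CFL condition $\dt<T\dx/\ell$). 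Assembling the three cases yields the stated sufficient condition $2|\tau|<T\dx\rho_e$. I expect the two delicate points to be: (i) the verification that \eqref{d2} and \eqref{d3} really linearise to the same coefficients, and (ii) casting $|\lambda_l|^2$ in the factored form above and then correctly identifying the worst Fourier mode while keeping the smallness requirement on $\dt$ uniform in $N$; everything else is routine algebra.
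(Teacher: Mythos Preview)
Your proof is correct and follows essentially the same route as the paper: you first check that the two schemes \eqref{d2} and \eqref{d3} linearise identically (the paper simply records the common coefficients $\alpha=1-A(1+B)$, $\beta=A(1+2B)$, $\gamma=-AB$, $\xi=0$ with $A=\dt\ell/(T\dx)$ and $B=\tau/(T\dx\rho_e)$, which agree with yours after absorbing $A$ into your $B$), then apply the circulant eigenvalue criterion of Proposition~\ref{stabdisc} and split cases according to the sign of $\tau$.

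The only genuine difference is the algebra downstream of the eigenvalue formula. The paper stays with the quadratic $f(c_l)=\beta(1-\beta)c_l+2\alpha\gamma c_l^{2}$ from Proposition~\ref{stabdisc}, decides convexity/concavity from the sign of $\alpha\gamma$, and locates its maximum on $[-1,1]$ (at an endpoint when convex, at the vertex $x_0=-\beta(1-\beta)/(4\alpha\gamma)$ when concave, with stability iff $x_0>1$). You instead exploit mass conservation $\alpha+\beta+\gamma=1$ to factor $|\lambda_l|^2-1=-2(1-c_l)\,h(c_l)$ with $h$ affine, so that stability reduces to checking the sign of a linear function on $[-1,1)$. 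This is a mild but clean simplification: it makes the worst Fourier mode transparent (endpoint according to $\mathrm{sgn}\,\tau$) and the thresholds $\beta>0$ for $\tau<0$ and $h(1)=A(1-A)-2B>0$ for $\tau>0$ drop out immediately, matching the paper's conditions $\tau>-\tfrac12 T\dx\rho_e$ and $\dt<\tfrac{T\dx}{\ell}-\tfrac{2\tau}{\ell\rho_e}$ exactly. Nothing is lost and the argument is a little shorter.
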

\begin{proof}% of Lemma~\ref{schemeD2-3}
The Godunov numerical schemes Eq.~(\ref{d2}) and Eq.~(\ref{d3}) are respectively
\be
F_2(\rho_{i},\rho_{i+1},\rho_{i+2},\rho_{i-1})=
\rho_i+\frac{\dt \ell}{T\dx}\left(\rho_{i+1}-\rho_i+\frac{\tau}{T\dx}
\left(\frac{\rho_{i+1}-\rho_{i+2}}{\rho_i}-\frac{\rho_i-\rho_{i+1}}{\rho_{i-1}}\right)\right),
\label{F2}\ee
and 
\be
F_3(\rho_{i},\rho_{i+1},\rho_{i+2},\rho_{i-1})=
\rho_i+\frac{\dt \ell}{T\dx}\Big(
\frac{\rho_{i+1}}{1-\frac{\tau}{T\dxb}\big(\frac1{\rho_{i+2}}-\frac1{\rho_{i+2}}\big)}
-\frac{\rho_{i}}{1-\frac{\tau}{T\dxb}\big(\frac1{\rho_{i+1}}-\frac1{\rho_{i}}\big)}\Big).
\label{F3}\ee
By construction, both give $\alpha=1-A(1+B)$, $\beta=A(1+2B)$, $\gamma=-AB$ and $\xi=0$ 
with $A=\frac{\dtb \ell}{T\dxb}$ and $B=\frac{\tau}{T\dxb\rho_e}$. 
As expected, the stability conditions of these two schemes are the same. 

If $\tau>-T\dx\rho_e$, then $\alpha>0$ for
\be 
\dt<\frac{T\dx}{\ell+\frac{\ell\tau}{T\dxb\rho_e}},
\label{Pal}\ee
 % --- (\ref{Pal}) is stronger than (\ref{L2});
and for all $\dt\ge0$ if $\tau\le-T\dx\rho_e$. 
$\beta$ is positive only if $\tau>-\frac12T\dx\rho_e$.
Moreover $1-\beta>0$ if Eq.~(\ref{Pal}) holds while 
%(\ref{Pbe}) is stronger than (\ref{L1})\\[4mm]
the sign of $\gamma$ is the one of $-\tau$. 

Here again, the stability conditions are distinguished according to the sign of $\tau$.
\bi
\item If $\tau<0$ and Eq.~(\ref{Pal}) holds, $f(x)=\beta(1-\beta)x+2\alpha\gamma x^2$ is strictly convex is maximal on $[-1,1]$ 
for $x=-1$ or $x=1$. Therefore the model is stable if $f(-1)<f(1)$; this is
\be
\tau>-\frac12T\dx\rho_e\qquad\mbox{and}\qquad\dt<\frac{T\dx}{\ell+\frac{2\ell\tau}{T\dxb\rho_e}}.
\ee
The condition for $\dt$ is weaker than (\ref{Pal}) since $\tau$ is negative. 
If $\tau\le-\frac12T\dx\rho_e$ then the system is unstable at the shortest wave-length frequency. 
A sufficiently condition for that the finite system produces the shortest frequency is simply $N\ge2$. 
Note that no condition holds on $\dt$ if $\tau\le-T\dx\rho_e$.

%=\frac12+\frac1{4B}+A\frac{1-2B}{4(1-a-4AB)}

\item If $\tau>0$ and Eq.~(\ref{Pal}) holds then $f(x)=\beta(1-\beta)x+2\alpha\gamma x^2$ is concave and is maximum at
$\mbox{arg}\sup_xf(x)=x_0=-\frac{\beta(1-\beta)}{4\alpha\gamma}>0$. 
We know that $\lambda_0^2=\alpha^2+\beta^2+\gamma^2-2\alpha\beta+f(1)=1$ (case $l=0$). 
Therefore the model is stable if $x_0>1$; this is
\be
\tau<\frac12T\dx\rho_e\qquad\mbox{and}\qquad\dt<\frac{T\dx}{\ell}-\frac{2\tau}{\ell\rho_e}.
\ee
The condition for $\dt$ is stronger than Eq.~(\ref{Pal}). 
If $\tau\ge\frac12T\dx\rho_e$ then the system is unstable at the frequency $\cos^{-1}(x_0)$ that is reachable in the finite system if 
$N>2\pi/\cos^{-1}(x_0)$. 
We have $x_0\fle1/2+T\dx\rho_e/(4\tau)$ as $\dt\fle0$, going from $1$ to $1/2$ according to $\tau$ (long wave).\eof
\ei

The stability conditions for the Godunov/Godunov and Godunov schemes Eq.~(\ref{d2}) and Eq.~(\ref{d3}) are summarised in Fig.~(\ref{summary}).
The same conditions as the microscopic model are obtained at the limit $\dt\fle0$ for 
$\dx=1/\rho_e$, i.e.~a space step equal to the mean spacing.

\bfig\bc\vspace{5mm}
\includegraphics[width=\textwidth]{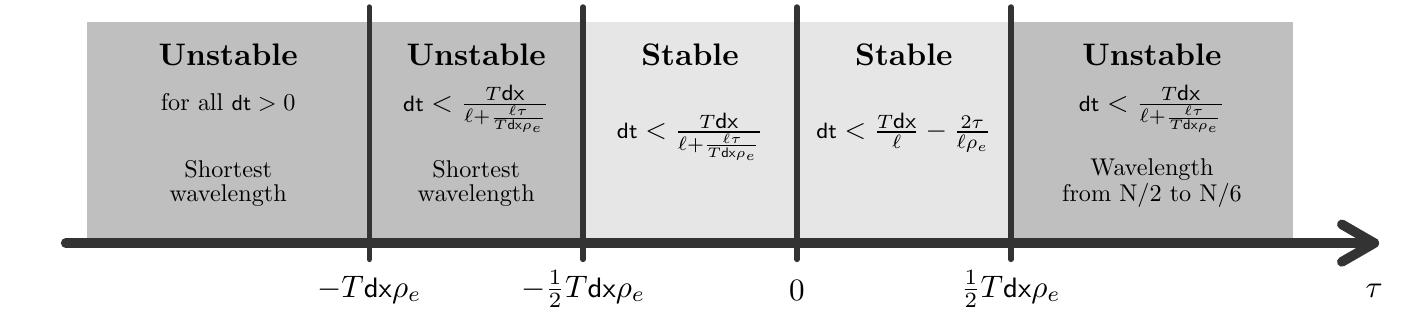}\vspace{-5mm}
\caption{Summary of the stability conditions for the Godunov/Godunov and Godunov schemes Eq.~(\ref{d2}) and Eq.~(\ref{d3}). 
Note that we have the additional condition $\tau<\dxb/V_0$, with $V_0=\sup_xV(x)$, for the simple Godunov scheme Eq.~(\ref{d3}).
}
\label{summary}\vspace{0mm}\ec\efig

\end{proof}

\subsection{Bounds on the speed and the density}

The microscopic model Eq.~(\ref{mmi}) is collision-free: 
the spacing remains by construction bigger than the vehicle length $\ell>0$, 
and the speed is positive and bounded. 
We check whether this property also occurs with the numerical schemes $F_j$, $j=1,2,3$ of the macroscopic models 
Eqs.~(\ref{F1}), (\ref{F2}) and (\ref{F3}). 
The models are from the first order therefore the speed is necessary positive and bounded if the optimal velocity functions are so defined. 
Moreover, the density remains bounded in $[1,\rho_M]$, with $\rho_M=1/\ell$, if 
\be 
F_j(0,a,b,c)\ge0\quad\mbox{and}\quad F_j(\rho_M,a,b,c)\le\rho_M\qquad\mbox{for all } (a,b,c). 
\ee
It is easy to check that such property holds only if $\tau\le0$ for the Godunov/Euler Eq.~(\ref{F1}), 
while it holds for $\tau \ge -\dx\rho_e/W'$ %(i.e. $\tau\ge-T\dx\rho_e$ with the triangular shape). 
with the Godunov/Godunov scheme Eq.~(\ref{F2}), and for $\tau <\dx/V_0$ with the simple Godunov scheme Eq.~(\ref{F3}). 
As the microscopic model, Eq.~(\ref{F2}) and Eq.~(\ref{F3}) 
are able to describe macroscopically unstable homogeneous solutions with large waves by ensuring 
that speed and density remain positive and bounded. 
The relation between instability and self-sustained traffic waves (or jamiton) are notably described in 
\cite{Flynn2009,Orosz2006,Orosz2010,Seibold2013} with microscopic and macroscopic second order models. 
In the next section, we analyse by simulation the unstable solutions we get with the first order models   
for different initial conditions.

\section{Simulation results} \label{sim}

In this section numerical simulations of the microscopic model Eq.~(\ref{mmi}) 
and of the simple Godunov scheme Eq.~(\ref{F3}) macroscopic model are compared.  
The car-following model Eq.~(\ref{mmi}) is simulated using an explicit Euler scheme. 
A ring (periodic boundaries) with a length 101 and 50 vehicles is considered. 
The optimal speed functions are $W(\Delta)=\max\{0,\min\{2,\Delta-1\}\}$ and $V(\rho)=W(1/\rho)$ 
corresponding to a triangular fundamental diagram, while the reaction time is $\tau=1$.  
The values of the parameters are set to obtain unstable homogeneous solutions. 
The time step is $\dt=0.01$.  
The space step for the Godunov scheme is the mean spacing $\dx=101/50=2.02$ 
in order to match the stability conditions of both microscopic and macroscopic model (see Eq.~(\ref{csmicro}) and Fig.~\ref{summary}) 
and to hold the CFL conditions (see Eq.~(\ref{CFL}) and Fig.~\ref{summary}).   
Three experiments are carried out with different initial conditions. 
In the first one, the initial configuration is a jam. 
The initial condition is random in the second  experiment 
while it is a perturbed homogeneous configuration in the last one. 

\subsection{Trajectories}
In Figs.~\ref{spti3}, \ref{spti1} and \ref{spti2}, the  trajectories of the microscopic model 
and the time series for the density by cell for the discrete macroscopic model (gray levels) are plot for respectively 
the jam, random and perturbed initial conditions. 
The jam stationary propagates within the first experiment in Fig.~\ref{spti3}. 
Both microscopic and macroscopic models rigorously describe the same dynamics. 
The dynamics obtained does not perfectly coincide for the random and perturbed initial conditions 
(see Figs.~\ref{spti1} and \ref{spti3}). 
Yet most of the dynamics seems to be well recaptured and notably the self-sustained emergence of traffic stop-and-go waves. 
Note that the waves propagate backward with the speed $-\ell/T$ that is close to the value empirically observed (see \cite{Nishinari2003}).

\bfig\bc\vspace{0mm}
\includegraphics[width=.81\textwidth]{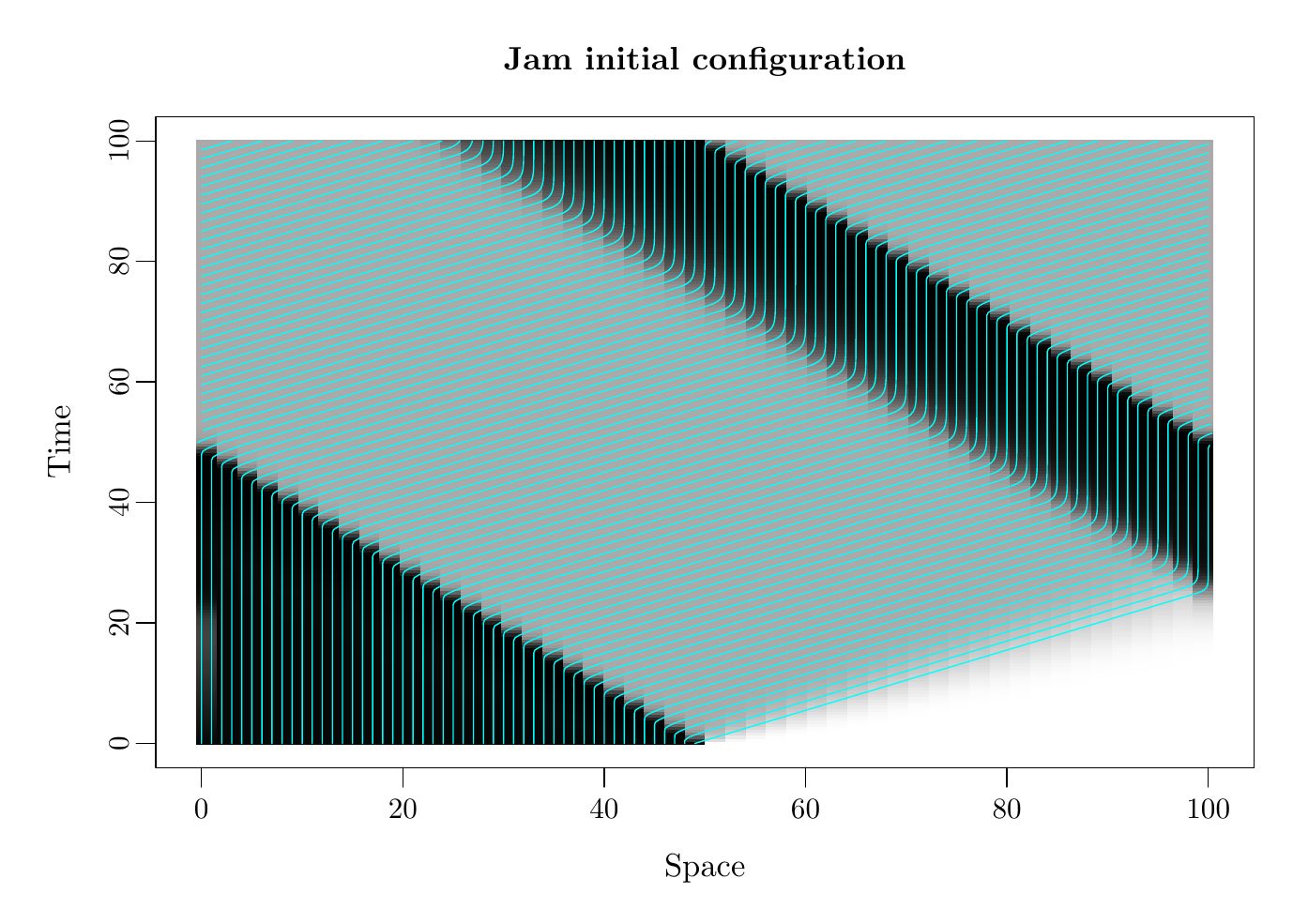}\vspace{-4mm}
\caption{The trajectories of the microscopic model 
(cyan curves) and the time series for the density by cell for the
discrete macroscopic model (gray levels) for jam initial conditions.}\label{spti3}
\ec\efig

\bfig\bc\vspace{0mm}
\includegraphics[width=.81\textwidth]{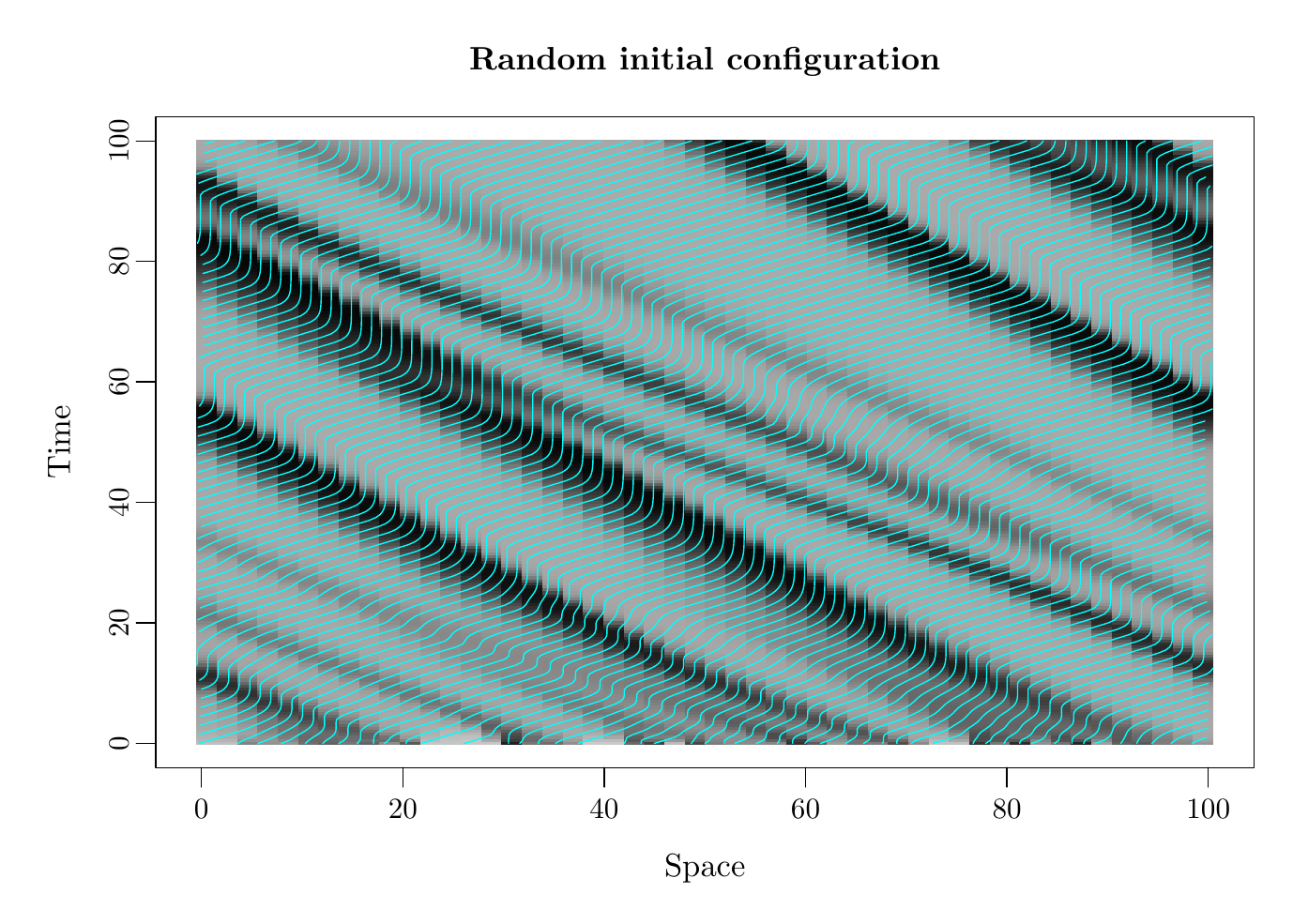}\vspace{-4mm}
\caption{The trajectories of the microscopic model 
(cyan curves) and the time series for the density by cell for the
discrete macroscopic model (gray levels) for random initial conditions.}\label{spti1}
\vspace{0mm}\ec\efig

\bfig\bc
\includegraphics[width=.81\textwidth]{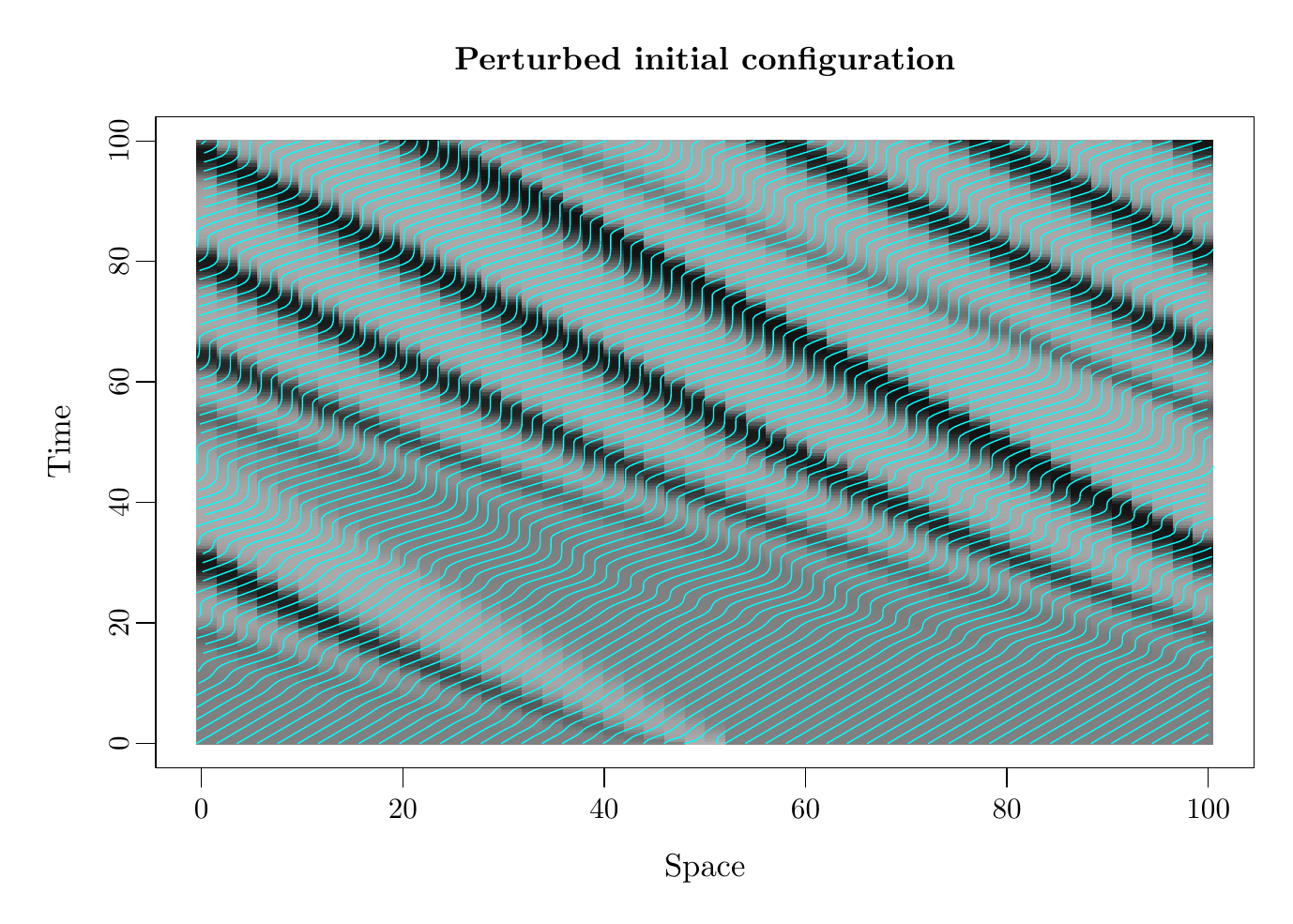}\vspace{-4mm}
\caption{The trajectories of the microscopic model 
(cyan curves) and the time series for the density by cell for the
discrete macroscopic model (gray levels) for perturbed initial conditions.}\label{spti2}
\ec\efig

\subsection{Fundamental diagram}

The fundamental diagram is the plot of the flow or the mean speed as a function of the density. 
It generally refers to spatial performances \cite{Edie1963}, 
that have to be distinguished from temporal ones \cite{Wardrop1952}. 
%Moreover time-aggregated data corresponds to the macroscopic fundamental diagram while instantaneous performances measured for one agent at given instants go for the microscopic diagram.   
Here to deal with spatial performances, we measured the spatial speed and the density and express the flow 
as the product of the density by the speed. 
The density for the microscopic model is the inverse of the spacing (see Eq.~(\ref{dr})) 
while the speed in the macroscopic model is (see Eq.~(\ref{Vtilde}))
\be
\tilde V(\rho_i,\rho_{i+1})=V\left(\frac \rho{1-\tau\rho (V(\rho_{i+1})-V(\rho_i))}\right).
\ee
The sequences obtained for the perturbed initial conditions (see Fig.~\ref{spti2}) 
are presented in Fig.~\ref{evol}. 
The performances are \textit{instantaneous} ones in the sense that they correspond to instantaneous measurements for a vehicle (microscopic model) 
and a cell (macroscopic) in the system. 
The variability in such diagram is larger than the one of the \textit{aggregated} fundamental diagram plotted in Fig.~\ref{fd-data} 
where the performances were averaged over time intervals.

\bfig\vspace{0mm}\bc
\hspace{-.1\textwidth}\includegraphics[width=.84\textwidth]{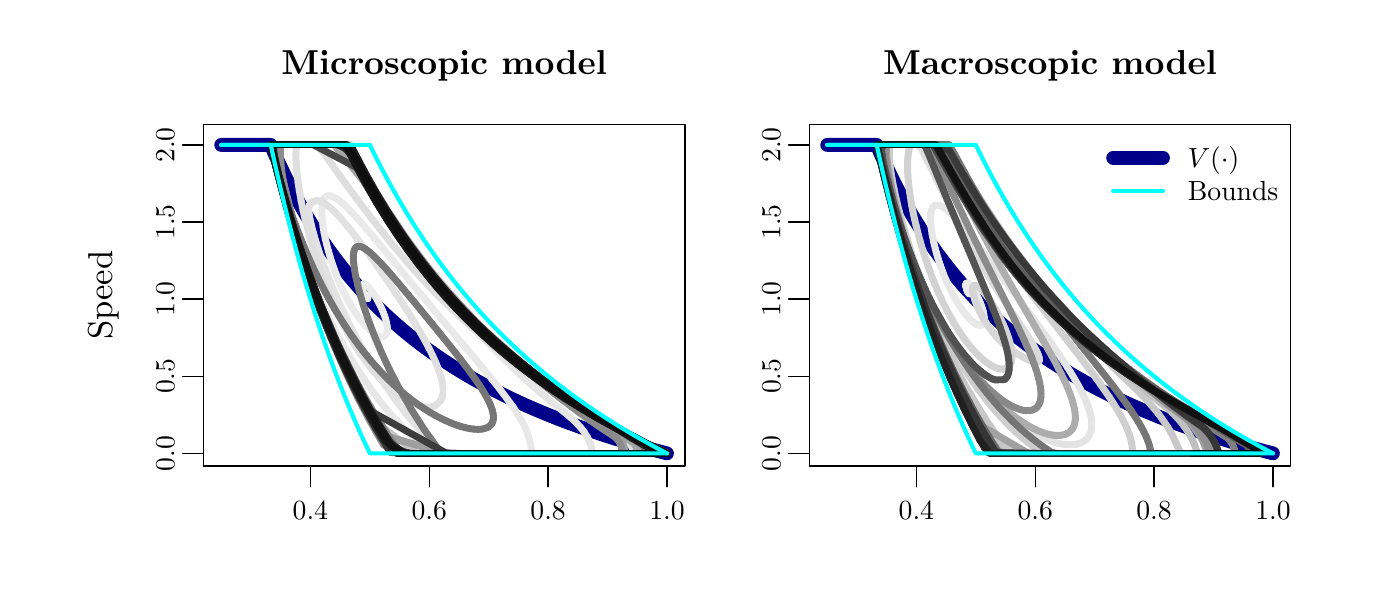}\\[-9mm]
\hspace{-.1\textwidth}\includegraphics[width=.84\textwidth]{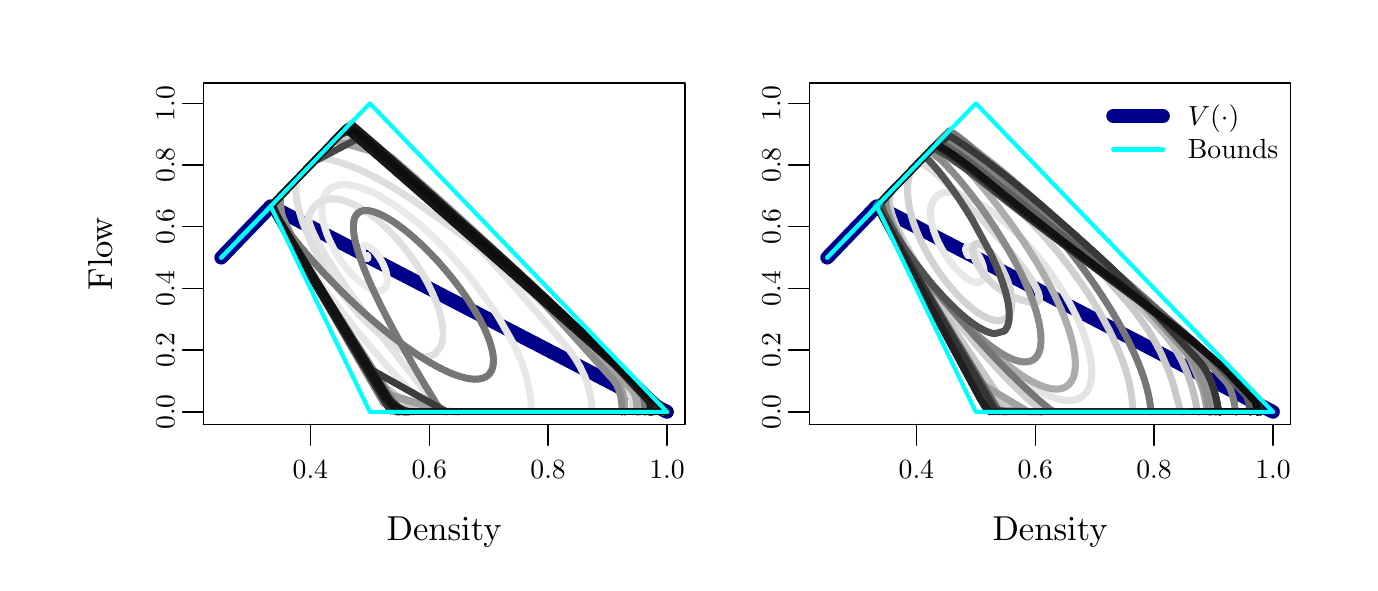}\\[-90mm]
\hspace{.85\textwidth}\includegraphics[width=.125\textwidth]{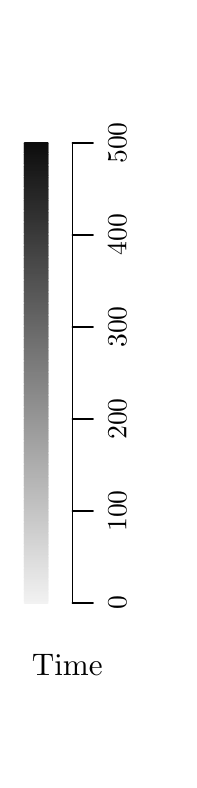}\\[10mm]
\caption{Sequence of speed and flow / density relation for the perturbed initial conditions (see Fig.~\ref{spti2}).
Left, for one vehicle (microscopic model) and right, for one cell (macroscopic model).}\label{evol}
\vspace{0mm}\ec\efig

Both microscopic and macroscopic systems converge to limit-cycles with self-sustained stop-and-go waves resulting in hysteresis curves in the 
microscopic fundamental diagram. 
Such phenomenon generate scattering of the fundamental diagram for which some bounds can be calculated 
\cite{Zhang2002,Colombo2003,Goatin2006,Colombo2010,Seibold2013,Fan2014}. 
The bounds $V^+$ and $V^-$ for the fundamental diagrams can here intuitively been determined from the microscopic model. 
The upper bound $V^+$ corresponds to the sequence of a vehicle moving at maximal speed $V_0$ behind a stopped vehicle:
\be
V^+(\rho)=\tilde V(\rho,1/\ell)=V\left(\frac \rho{1+\tau\rho V(\rho)}\right).
\label{V+}
\ee 
Due to the reaction time, the distance tends to be smaller and the fundamental diagram is `over-estimated'. 
Oppositely, the lower bound $V^-$ corresponds to the sequence of a stopped vehicle following a predecessor moving at the maximal speed $V_0$:
\be
V^-(\rho)=\tilde V(\rho_i,0)=V\left(\frac \rho{1-\tau\rho (V_0-V(\rho))}\right).
\label{V-}
\ee 
Here the reaction time induces a delay in the acceleration and an under-estimation of the fundamental diagram.

As in \cite{Seibold2013,Fan2014}, the bounds Eqs.~(\ref{V+}) and (\ref{V-}) obtained with the macroscopic model are compared 
to real instantaneous pedestrians and road traffic data in Figs.~\ref{fd-ped} and \ref{fd-auto}. 
The pedestrians data comes from a laboratory experiment with participants in a ring geometry \cite{peddata}. 
Several experiments have been carried out with different density levels. 
The road traffic data are real measurement of trajectories on an American highway \cite{ngsim}. 
The speed, density and the flow are measured as previously 
(i.e.~the density is the inverse of the spacing while the flow is the product of the density by the speed). 
A triangular fundamental diagram with 3 parameters $V(\rho)=\min\big\{V_0,\frac1T(1/\rho-1)\big\}$ is used again. 
The parameters are the ones of an estimation by least squares 
for the pedestrians $V_0=0.9$~m/s, $\ell=0.3$~m and $T=\tau=1$~s, see Fig.~\ref{fd-ped}, 
while $V_0=15$~m/s, $\ell=5$~m and $T=\tau=2$~s for the vehicles, see Fig.~\ref{fd-auto}. 
The bounds present a reasonable agreement with the data, even if no clustering of measurements are observed around them.

\bfig\bc
\includegraphics[width=.9\textwidth]{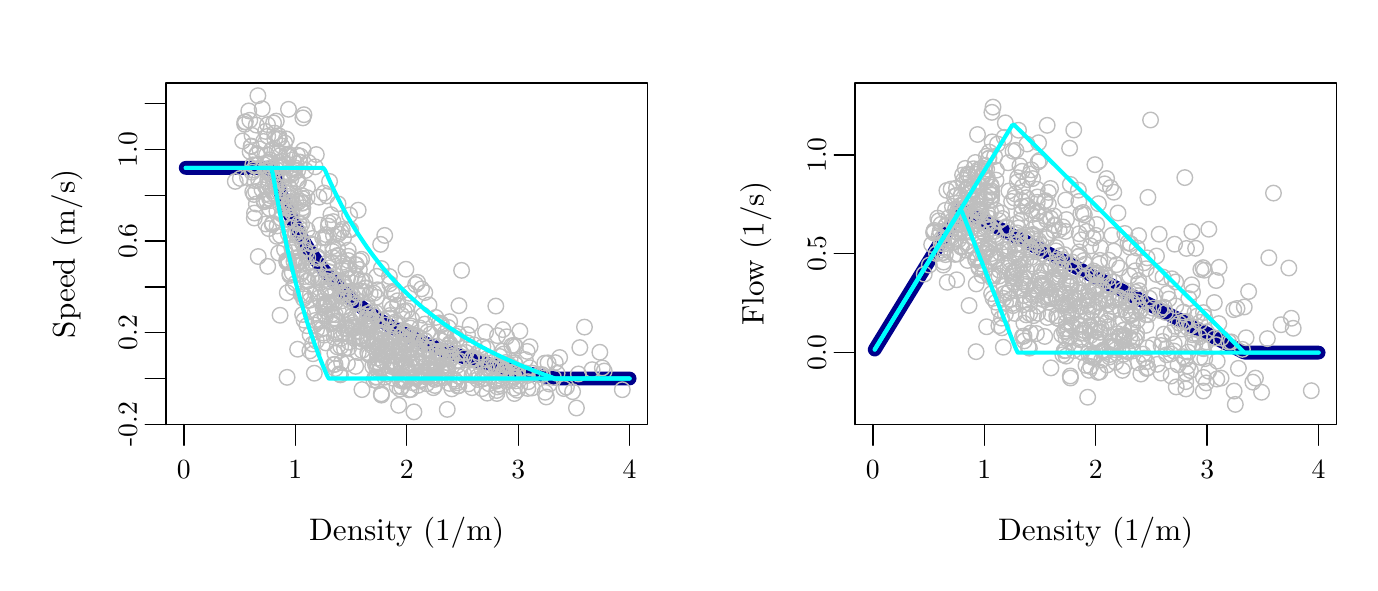}\vspace{-2mm}
\caption{Instantaneous speed/density and flow/density measurements 
for real pedestrian flows \cite{peddata} and the bounds Eqs.~(\ref{V+}) and (\ref{V-}) 
for $V_0=0.9$~m/s, $\ell=0.3$~m and $T=\tau=1$~s.}\label{fd-ped}
\ec\efig

\bfig\bc\vspace{0mm}
\includegraphics[width=.9\textwidth]{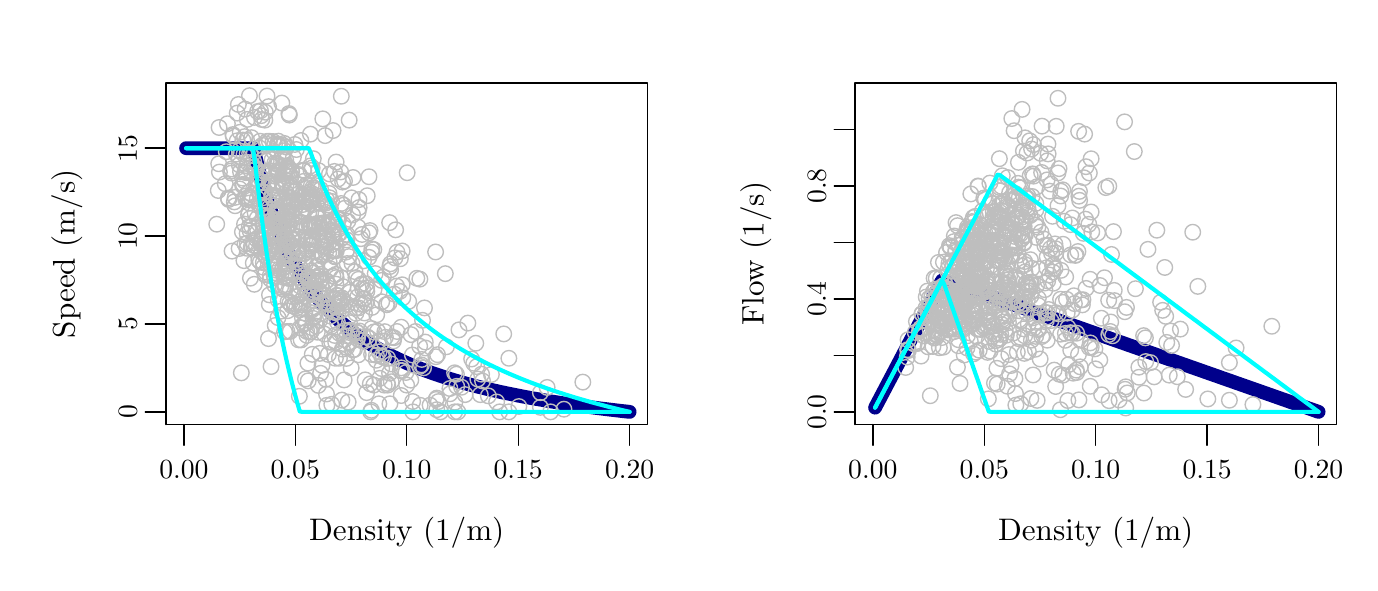}\vspace{-2mm}
\caption{Individual speed/density and flow/density measurements 
for real road traffic flows \cite{ngsim} and the bounds Eqs.~(\ref{V+}) and (\ref{V-}) 
for $V_0=15$~m/s, $\ell=5$~m and $T=\tau=2$~s.}\label{fd-auto}\vspace{0mm}
\ec\efig

\section{Conclusion}\label{ccl}

Starting from a speed following model, we derive a first order convection-diffusion continuum traffic flow model 
that we discretised using Godunov and Euler schemes. 
Simulation results shown that discrete macroscopic models can recapture the dynamics of the microscopic model, 
if specific values for the space discretization are chosen.  
More precisely, the linear stability conditions of the homogeneous solutions for the macroscopic models
match the ones of the microscopic model for specific values of the space discretization and sufficiently small time steps. 

For unstable conditions, i.e.~for large reaction times, the dynamics obtained describe self-sustained stop-and-go waves, 
with hysteresis cycles and a large scattering of the fundamental flow/density diagram. 
Such characteristics are observed in real data \cite{Treiterer1974,Kerner1997,Kerner2000,Chowdhury2000,ZhangJ2014} 
as well as for second order models \cite{Zhang2002,Colombo2003,Goatin2006,Berthelin2008,Flynn2009,Seibold2013,Colombo2010,Fan2014}. 
Here it is achieved with first order models ensuring by construction that the models are physical and `collision-free' 
(i.e. bounded and positive speed as well as density).
Further investigations are necessary to understand the impact of the shape of the optimal velocity function 
on the characteristics of the waves. 

The macroscopic model corresponding to the follow-the-leader model is a first order elliptic convection-diffusion equation, for which 
the convection part is calibrated by the optimal velocity function (i.e.~the fundamental diagram), while the diffusion 
is proportional to the reaction time parameter. 
More precisely the diffusion is negative in deceleration phases where the density get higher, and it is positive 
in acceleration phases where the density decreases. 
Such mechanism seems to be responsible for the appearance of oscillations and self-sustained non-linear stop-and-go waves in the system. 
This observation remains to be confirmed rigorously, yet it could give us a way to explain the wave formations. 

\bibliographystyle{plain}
\bibliography{bibli,completeBibTex}

\begin{thebibliography}{10}

\bibitem{Appert-RollandDegondMotsch2011aa}
C.~Appert-Rolland, P.~Degond, and S.~Motsch.
\newblock Two-way multi-lane traffic model for pedestrians in corridors.
\newblock {\em Netw. Heterog. Media}, 6(3):351--381, 2011.

\bibitem{Aw2002}
A.~Aw, A.~Klar, M.~Rascle, and T.~Materne.
\newblock Derivation of continuum traffic flow models from microscopic
  follow-the-leader models.
\newblock {\em SIAM Journal on Applied Mathematics}, 63(1):259--278, 2002.

\bibitem{Aw2000}
A.~Aw and M.~Rascle.
\newblock Resurrection of "second order" models of traffic flow.
\newblock {\em SIAM Journal on Applied Mathematics}, 60(3):916--938, 2000.

\bibitem{Bando1995}
M.~Bando, K.~Hasebe, A.~Nakayama, A.~Shibata, and Y.~Sugiyama.
\newblock Dynamical model of traffic congestion and numerical simulation.
\newblock {\em Phys. Rev. E}, 51(2):1035--1042, 1995.

\bibitem{BayenClaudel2010aa}
A.~M. Bayen and C.~G. Claudel.
\newblock {Lax--Hopf} based incorporation of internal boundary conditions into
  {Hamilton--Jacobi} equation. {Part I}: Theory.
\newblock {\em IEEE Transactions on automatic control}, 55(5):1142--1157, 2010.

\bibitem{Berthelin2008}
F.~Berthelin, P.~Degond, M.~Delitala, and M.~Rascle.
\newblock A model for the formation and evolution of traffic jams.
\newblock {\em Archive for Rational Mechanics and Analysis}, 187(2):185--220,
  2008.

\bibitem{Blandin2011}
Sebastien Blandin, Dan Work, Paola Goatin, Benedetto Piccoli, and Alexandre
  Bayen.
\newblock A general phase transition model for vehicular traffic.
\newblock {\em SIAM journal on Applied Mathematics}, 71(1):107--127, 2011.

\bibitem{Burger2003}
R.~B\"{u}rger and K.~H. Karlsen.
\newblock On a diffusively corrected kinematic-wave traffic flow model with
  changing road surface conditions.
\newblock {\em Mathematical Models and Methods in Applied Sciences},
  13(12):1767--1799, 2003.

\bibitem{Chowdhury2000}
D.~Chowdhury, L.~Santen, and A.~Schadschneider.
\newblock Statistical physics of vehicular traffic and some related systems.
\newblock {\em Phys. Rep.}, 329(4-6):199--329, 2000.

\bibitem{ChraibiKemlohSchadschneider2011aa}
M.~Chraibi, U.~Kemloh, A.~Schadschneider, and A.~Seyfried.
\newblock Force-based models of pedestrian dynamics.
\newblock {\em Netw. Heterog. Media}, 6(3):425--442, 2011.

\bibitem{Colombo2003}
R.M. Colombo.
\newblock Hyperbolic phase transitions in traffic flow.
\newblock {\em SIAM Journal on Applied Mathematics}, 63(2):708--721, 2003.

\bibitem{Colombo2010}
R.M. Colombo, F.~Marcellini, and M.~Rascle.
\newblock A 2-phase traffic model based on a speed bound.
\newblock {\em SIAM Journal on Applied Mathematics}, 70(7/8):2652--2666, 2010.

\bibitem{Daganzo1995}
C.~F. Daganzo.
\newblock Requiem for second-order fluid approximations of traffic flow.
\newblock {\em Transport. Res. B: Meth.}, 29(4):277--286, 1995.

\bibitem{Davis2003}
L.~C. Davis.
\newblock Modifications of the optimal velocity traffic model to include delay
  due to driver reaction time.
\newblock {\em Physica A}, 319(0):557--567, 2003.

\bibitem{Edie1963}
L.C. Edie.
\newblock Discussion of traffic stream measurements and definitions.
\newblock In J.~Almond, editor, {\em Proc. of the 2nd International Symposium
  on Transportation and Traffic Theory}, pages 139--154, 1963.

\bibitem{Fan2014}
S.~Fan, M.~Herty, and B.~Seibold.
\newblock Comparative model accuracy of a data-fitted generalized
  {Aw-Rascle-Zhang} model.
\newblock {\em Netw. Heterog. Media}, 9(2):239--268, 2014.

\bibitem{Flynn2009}
M.R. Flynn, A.R. Kasimov, J.-C. Nave, R.R. Rosales, and B.~Seibold.
\newblock Self-sustained nonlinear waves in traffic flow.
\newblock {\em Phys. Rev. E}, 79(5):056113, 2009.

\bibitem{peddata}
{Forschungszentrum J\"ulich}.
\newblock {Division Civil Safety and Traffic} ---
  \url{http://ped.fz-juelich.de/database}\normalsize.

\bibitem{Goatin2006}
P.~Goatin.
\newblock The {Aw--Rascle} vehicular traffic flow model with phase transitions.
\newblock {\em Mathematical and Computer Modelling}, 44(3–4):287--303, 2006.

\bibitem{Godunov1959aa}
S.~K. Godunov.
\newblock A difference scheme for the numerical computation of a discontinuous
  solution of the hydrodynamic equation.
\newblock {\em Math. Sbornik}, 47:271--306, 1959.

\bibitem{Greenberg2002}
J.M. Greenberg.
\newblock Extensions and amplifications of a traffic model of {A}w and
  {R}ascle.
\newblock {\em SIAM Journal on Applied Mathematics}, 62(3):729--745, 2002.

\bibitem{Helbing1997aa}
D.~Helbing.
\newblock {\em Verkehrsdynamik. Neue physikalische Modellierungskonzepte}.
\newblock Springer Verlag, Berlin, Berlin, 1997.

\bibitem{Kerner2000}
B.~S. Kerner.
\newblock Phase transitions in traffic flow.
\newblock In D.~Helbing, H.~J. Herrmann, M.~Schreckenberg, and D.~E. Wolf,
  editors, {\em Traffic and Granular Flow '99}, pages 253--283, Berlin,
  Heidelberg, 2000. Springer Verlag.

\bibitem{Kerner1997}
B.~S. Kerner and H.~Rehborn.
\newblock Experimental properties of phase transitions in traffic flow.
\newblock {\em Phys. Rev. Lett.}, 79:4030--4033, 1997.

\bibitem{Lebacque1993aa}
J.-P. Lebacque.
\newblock Les mod\`{e}les macroscopiques du trafic.
\newblock {\em Annales des Ponts}, 67:24--45, 1993.

\bibitem{Lebacque1996}
J.-P. Lebacque.
\newblock The {G}odunov scheme and what it means for first order traffic flow
  models.
\newblock In J.-B. Lesort, editor, {\em Proc. of the 13th International
  Symposium on Transportation and Traffic Theory}, pages 647--677, 1996.

\bibitem{Lebacque2007}
J.-P. Lebacque, S.~Mammar, and H.~H. Salem.
\newblock Generic second order traffic flow modelling.
\newblock In {\em Proceedings of the 17th Symposium on Transportation and
  Traffic Theory}, pages 755--776. Elsevier, Oxford, 2007.

\bibitem{Lebacque2005}
Jean-Patrick Lebacque, H~Haj-Salem, and Salim Mammar.
\newblock Second order traffic flow modeling: supply-demand analysis of the
  inhomogeneous {R}iemann problem and of boundary conditions.
\newblock In {\em Proceedings of the 10th Euro Working Group on Transportation
  (EWGT)}, 2005.

\bibitem{lw}
M.~H. Lighthill and G.~B. Whitham.
\newblock On kinematic waves {II}: a theory of traffic flow on long, crowded
  roads.
\newblock In {\em Proceedings of the Royal Society of London series A}, volume
  229, pages 317--345, 1955.

\bibitem{Nelson2000}
P.~Nelson.
\newblock Synchronized traffic flow from a modified {L}ighthill--{W}hitman
  model.
\newblock {\em Phys. Rev. E}, 61:R6052--R6055, 2000.

\bibitem{Newell1961}
G.~F. Newell.
\newblock Nonlinear effects in the dynamics of car-following.
\newblock {\em Op. Res.}, 9(2):209--229, 1961.

\bibitem{Nishinari2003}
K.~Nishinari, M.~Treiber, and D.~Helbing.
\newblock Interpreting the wide scattering of synchronized traffic data by time
  gap statistics.
\newblock {\em Phys. Rev. E}, 68:067101, 2003.

\bibitem{ngsim}
US~Department of~Transportation.
\newblock {NGSIM: Next Generation Simulation --- \url
  {http://www.ngsim.fhwa.dot.gov}\normalsize}.

\bibitem{Orosz2006}
G.~Orosz and G.~St{\'e}p{\'a}n.
\newblock Subcritical {H}opf bifurcations in a car-following model with
  reaction-time delay.
\newblock {\em Proc. Roy. Soc. London Ser. A}, 462(2073):2643--2670, 2006.

\bibitem{Orosz2010}
G.~Orosz, R.~E. Wilson, and G.~St{\'e}p{\'a}n.
\newblock Traffic jams: dynamics and control.
\newblock {\em Proc. Roy. Soc. London Ser. A}, 368(1928):4455--4479, 2010.

\bibitem{Orosz2009}
G.~Orosz, R.~E. Wilson, R.~Szalai, and G.~St\'ep\'an.
\newblock Exciting traffic jams: Nonlinear phenomena behind traffic jam
  formation on highways.
\newblock {\em Phys. Rev. E}, 80(4):046205, 2009.

\bibitem{Payne1971}
H.~J. Payne.
\newblock Models of freeway traffic and control.
\newblock In {\em Mathematical Models of Public Systems, Simulation Council
  Proceedings Series}, volume~1, pages 51--61, 1971.

\bibitem{Pipes1953}
L.~A. Pipes.
\newblock An operational analysis of traffic dynamics.
\newblock {\em J. Appl. Phys.}, 24(3):274--281, 1953.

\bibitem{r}
P.~I. Richards.
\newblock Shock waves on a highway.
\newblock {\em Op. Res.}, 4(1):42--51, 1956.

\bibitem{SchadschneiderSeyfried2011aa}
A.~Schadschneider and A.~Seyfried.
\newblock Empirical results for pedestrian dynamics and their implications for
  modeling.
\newblock {\em Netw. Heterog. Media}, 6(3):545--560, 2011.

\bibitem{Seibold2013}
B.~Seibold, M.R. Flynn, A.R. Kasimov, and R.R. Rosales.
\newblock Constructing set-valued fundamental diagrams from jamiton solutions
  in second order traffic models.
\newblock {\em Netw. Heterog. Media}, 8(3):745--772, 2013.

\bibitem{Tordeux2014}
A.~Tordeux and A.~Seyfried.
\newblock Collision-free nonuniform dynamics within continuous optimal velocity
  models.
\newblock {\em Phys. Rev. E}, 90:042812, 2014.

\bibitem{btre}
M.~Treiber and A.~Kesting.
\newblock {\em Traffic Flow Dynamics}.
\newblock Springer, Berlin, 2013.

\bibitem{Treiterer1974}
J.~Treiterer and J.A. Myers.
\newblock The hysteresis phenomenon in traffic flow.
\newblock In D.~J. Buckley, editor, {\em Transportation and Traffic Theory,
  Proceedings of the Sixth International Symposium}, pages 13--38, 1974.

\bibitem{Wardrop1952}
J.H. Wardrop.
\newblock Some theoretical aspects of road traffic research.
\newblock {\em Proceedings of the Institution of Civil Engineers},
  1(3):325--362, 1952.

\bibitem{Whitham2011}
G.B. Whitham.
\newblock {\em Linear and Nonlinear Waves}.
\newblock Pure and Applied Mathematics: A Wiley Series of Texts, Monographs and
  Tracts. Wiley, 2011.

\bibitem{Zhang2002}
H.M. Zhang.
\newblock A non-equilibrium traffic model devoid of gas-like behavior.
\newblock {\em Transport. Res. B: Meth.}, 36(3):275--290, 2002.

\bibitem{ZhangJ2014}
J.~Zhang, W.~Mehner, S.~Holl, M.~Boltes, E.~Andresen, A.~Schadschneider, and
  A.~Seyfried.
\newblock Universal flow-density relation of single-file bicycle, pedestrian
  and car motion.
\newblock {\em Physics Letters A}, 378(44):3274--3277, 2014.

\end{thebibliography}

\newpage
\tableofcontents

\end{document}